\def\thm@space@setup{%
  \thm@preskip=\parskip \thm@postskip=0pt
}
\def\qed{\hfill\ifhmode\unskip\nobreak\fi\quad\ifmmode\Box\else\hfill$\Box$\fi}
\def\ite#1{\hfill\break${}$\hbox to 50pt {\quad(#1)\hfill}}
\newtheorem{thm}{Theorem}[section]
\newtheorem{definition}{Definition}
\newtheorem{lem}[thm]{Lemma}
\newtheorem{conjecture}{Conjecture}
\def\ex{{\rm{ex}}}
\def\ord{{\rm{ord}}}
\begin{document}

\pagestyle{myheadings}
\markright{{\small{\sc F\"uredi, Jiang, Kostochka, Mubayi, and Verstra\"ete:   Splitting theorem
}}}

\title{
Partitioning ordered hypergraphs}

\author{
\hspace{0.8in} Zolt\'an F\" uredi\thanks{Research supported by grant KH130371
from the National Research, Development and Innovation Office NKFIH.}
\and
Tao Jiang\thanks{Research partially supported by National Science Foundation award DMS-1400249.}
\and
Alexandr Kostochka\thanks{Research  supported in part by NSF grant
 DMS-1600592 and by grants  18-01-00353A and 19-01-00682
  of the Russian Foundation for Basic Research.
} \hspace{0.8in} \smallskip \and
Dhruv Mubayi\thanks{Research partially supported by NSF awards DMS-1300138 and 1763317.} \and Jacques Verstra\"ete\thanks{Research supported by NSF awards DMS-1556524
and DMS-1800332.}
}

\maketitle

\begin{abstract}

An {\em ordered $r$-graph} is an $r$-uniform hypergraph whose vertex set is linearly ordered. Given $2\leq k\leq r$, an ordered $r$-graph $H$
is {\em interval} $k$-{\em partite}  if
there exist at least $k$ disjoint intervals in the ordering  such that every edge of $H$ has nonempty intersection with each of the intervals and is contained in their union.
 
 Our main result implies that for each $\alpha > k - 1$ and $d>0$, every $n$-vertex  ordered $r$-graph  with $d \,n^{\alpha}$ edges has for some $m\leq n$ an $m$-vertex  interval $k$-partite subgraph  with $\Omega(d\, m^{\alpha})$ edges. This is an extension to ordered $r$-graphs of the observation
 by Erd\H os and Kleitman that every $r$-graph contains an $r$-partite subgraph with a constant proportion of the edges.
 The restriction  $\alpha > k-1$ is sharp. 
We also present applications of the main result to several extremal problems for ordered hypergraphs.
\end{abstract}

\section{Introduction}

We let  $[n]= \{1, \ldots, n\}$ and use standard asymptotic notation; in particular, given functions $f, g: \mathbb{Z^+} \to \mathbb{R^+}$, we write $f(n) = \Omega(g(n))$
if there exists $c>0$ such that $f(n)\ge c g(n)$ for all $n \geq 1$.  We associate a hypergraph $H$ with its edge set and write $e(H)$ for the number of  the edges and $v(H)$ for the number of the vertices  in $H$.

\smallskip

 An $r$-{\em graph} is a hypergraph with all edges of size $r$; it is $r$-{\em partite} if there is a partition of the vertex set into $r$ parts such that every edge has exactly one vertex in each part. The following observation is due to Erd\H{o}s and Kleitman:

{\bf Proposition A.} (Erd\H{o}s-Kleitman~\cite{EK}) {\em
Every $r$-graph contains an $r$-partite subgraph with at least $r!/r^r$ proportion of its edges.}

In particular, any extremal problem for $r$-graphs can be reduced to the corresponding extremal problem where the underlying $r$-graph is $r$-partite with the loss of only a constant multiplicative factor. In this paper, we consider analogs of this result in the ordered hypergraph setting and illustrate their use on some ordered extremal hypergraph problems.

\smallskip

 An {\em ordered hypergraph} is a hypergraph together with a linear ordering of its vertex set.
Extremal problems on ordered hypergraphs arose from several sources, in particular, from combinatorial geometry, enumeration of permutations with forbidden subpermutations, and the study of matrices  with forbidden submatrices --
see for instance Anstee~\cite{Anstee,AnFu}, F\"{u}redi and  Hajnal~\cite{FurHaj}, Pach and Tardos~\cite{PT}, Marcus and Tardos~\cite{MT}, Tardos~\cite{T}, Fox~\cite{Fox}.

\smallskip

 Let $V$ be a linearly ordered set.  An {\em interval} in $V$ is a set of consecutive elements in the ordering.
For $A,B\subset V$, we write $A<B$ to mean that $a<b$ for every $a \in A, b \in B$. A key definition in our work is the following:

\begin{definition}
Let $k$ be a positive integer.
An ordered $r$-graph $H$ is {\em interval} $k$-{\em partite} if for some $\ell\ge k$ there are intervals $I_1<I_2< \cdots <I_{\ell}$ such that every edge of $H$ is contained in $I_1\cup \ldots\cup I_\ell$ and has nonempty intersection with $I_j$ for each $1\le j \le \ell$.
\end{definition}

In particular, an ordered $r$-graph $H$ is {\em interval $r$-partite} if there exist  intervals
$I_1< I_2< \cdots< I_r$ in $V(G)$ such that every edge of $H$ contains exactly one vertex from each $I_i$.
In these terms, the Erd\H os--Kleitman observation, Proposition A, does not hold for ordered graphs as witnessed by the following simple example:  every interval bipartite subgraph of the ordered graph with vertex set $[2n]$ and
edge set $\{\{2i-1,2i\}: 1\leq i\leq n\}$ has at most one edge. However, Pach and Tardos~\cite{PT}
showed that dense ordered graphs contain relatively dense interval bipartite  graphs using the following result:

{\bf Theorem B.} (Pach and Tardos~\cite{PT}). {\em
	Each ordered  $n$-vertex graph $G$ is the union of edge-disjoint subgraphs $G_i$ for $0 \leq i \leq  \lfloor \log_2 n\rfloor$ such that
	each $G_i$ is a union of at most $2^i$ interval bipartite graphs with parts of size at most $\lceil n/2^i \rceil$.}

Our first main result is the following ordered hypergraph analog of Theorem B:

\begin{thm} \label{splittinglem}
Let $2 \leq k \leq r \leq n$ be integers. Then every ordered $n$-vertex $r$-graph $H$ is the union of
edge-disjoint ordered $r$-graphs $H_i$ for $0 \leq i \leq \lfloor \log_2 n\rfloor$ such that each $H_i$ is a union of at most $\frac{1}{(k-1)!}\sum_{j = k}^r {2k-2 \choose j} \cdot 2^{i(k - 1)}$ interval $k$-partite $r$-graphs with parts of size at most $\lceil n/2^i\rceil$.
\end{thm}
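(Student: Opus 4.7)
The plan is to imitate the Pach--Tardos dyadic partitioning argument of Theorem~B, adapting the bookkeeping to $r$-graphs and interval $k$-partiteness. For each $0 \le i \le \lfloor \log_2 n\rfloor$, let $\mathcal{I}_i$ be the partition of $[n]$ into $2^i$ consecutive dyadic intervals of size at most $\lceil n/2^i\rceil$, and for an edge $e$ of $H$ write $S_i(e)\subseteq \mathcal{I}_i$ for the set of level-$i$ intervals meeting $e$. Since $|S_i(e)|$ is nondecreasing in $i$ and equals $1$ at $i=0$, define $\ell(e)$ to be the least $i$ with $|S_i(e)|\ge k$ and assign $e$ to $H_{\ell(e)}$. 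This yields an edge-disjoint decomposition $H=\bigcup_i H_i$ with $H_0=\emptyset$, since $k\ge 2$.

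The key structural observation is that for $e\in H_i$ with $i\ge 1$, the minimality of $\ell(e)$ forces $|S_{i-1}(e)|\le k-1$; and because each level-$(i-1)$ interval is the disjoint union of two level-$i$ intervals, $S_i(e)$ is contained in the set of $2|S_{i-1}(e)|\le 2k-2$ level-$i$ halves of the members of $S_{i-1}(e)$. I parameterize covering subgraphs of $H_i$ by pairs $(Q,J)$, where $Q\subseteq\mathcal{I}_{i-1}$ has $|Q|=k-1$ and $J$ is a subset of the $2k-2$ level-$i$ halves of $Q$ with $|J|=j\in[k,r]$, and set
\[
H_i(Q,J)\;=\;\{e\in H_i:\,S_{i-1}(e)\subseteq Q,\; S_i(e)=J\}.
\]
Every $e\in H_i$ lies in some $H_i(Q,J)$: pad $S_{i-1}(e)$ arbitrarily to a set $Q\subseteq\mathcal{I}_{i-1}$ of size $k-1$, and take $J=S_i(e)$. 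Each nonempty $H_i(Q,J)$ is interval $|J|$-partite, with the members of $J$ (listed in order) serving as the intervals $I_1<\cdots<I_{|J|}$: every edge meets each $I_s$ because $S_i(e)=J$, and each edge is contained in $\bigcup J$. Hence $H_i(Q,J)$ is interval $k$-partite with parts of size at most $\lceil n/2^i\rceil$.

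The final step is the count: the number of pairs $(Q,J)$ is at most
\[
\binom{2^{i-1}}{k-1}\sum_{j=k}^r\binom{2k-2}{j}\;\le\;\frac{2^{(i-1)(k-1)}}{(k-1)!}\sum_{j=k}^r\binom{2k-2}{j}\;\le\;\frac{1}{(k-1)!}\sum_{j=k}^r\binom{2k-2}{j}\cdot 2^{i(k-1)},
\]
matching the claim with a factor $2^{k-1}$ of slack. The main obstacle I anticipate is the corner case where $r\le 2k-2$ and $n$ is not a power of $2$: at the finest level, an edge $e$ could still have $|S_{\lfloor\log_2 n\rfloor}(e)|<k$, since level-$\lfloor\log_2 n\rfloor$ intervals may have size $2$. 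Such edges are placed in $H_{\lfloor\log_2 n\rfloor}$ and handled by further refining the offending size-two dyadic intervals into singletons (still of size $\le\lceil n/2^{\lfloor\log_2 n\rfloor}\rceil$), grouping them by $S_{\lfloor\log_2 n\rfloor}(e)$ together with the induced singleton-hit pattern; the $2^{k-1}$ slack in the main bound absorbs the additional subgraphs this produces.
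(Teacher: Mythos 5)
Your decomposition is the same as the paper's: dyadically refine $[n]$, assign each edge $e$ to the first level $i$ at which it meets at least $k$ intervals, and cover $H_i$ by grouping edges according to which level-$i$ intervals they meet, using that these intervals all lie among the $\le 2k-2$ halves of the $\le k-1$ level-$(i-1)$ intervals meeting $e$. The structural claim, the interval $k$-partiteness of each group, and the main count $\binom{2^{i-1}}{k-1}\sum_{j=k}^r\binom{2k-2}{j}\le \frac{2^{(i-1)(k-1)}}{(k-1)!}\sum_{j=k}^r\binom{2k-2}{j}$ are all correct. Your finest-level worry is legitimate for your choice of $\mathcal{I}_{\lfloor\log_2 n\rfloor}$ (intervals of size up to $2$), and your repair by splitting into singletons works, with the factor-$2$ slack absorbing the extra groups; the paper simply sidesteps this by defining the level-$\lfloor\log_2 n\rfloor$ partition to consist of singletons, so that every edge meets $r\ge k$ of them and $i(e)$ is automatically well defined.

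There is, however, one concrete gap you did not address: the padding step ``pad $S_{i-1}(e)$ arbitrarily to a set $Q\subseteq\mathcal{I}_{i-1}$ of size $k-1$'' is impossible when $|\mathcal{I}_{i-1}|=2^{i-1}<k-1$, and in that regime your bound $\binom{2^{i-1}}{k-1}=0$ would assert that $H_i$ is empty, which is false. The bad range is $k\le 2^i$ together with $2^{i-1}\le k-2$, which is nonvacuous for $k\ge 4$: for example with $k=4$ and $i=2$, an edge meeting all four level-$2$ intervals but only two level-$1$ intervals lies in $H_2$, yet $\mathcal{I}_1$ has only two members, so no $Q$ of size $3$ exists. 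The fix is short but needs to be said: when $|\mathcal{I}_{i-1}|=s\le k-2$, take $Q=\mathcal{I}_{i-1}$ and $J\subseteq\mathcal{I}_i$ with $|J|\ge k$, giving at most $\sum_{j=k}^r\binom{2s}{j}\le\sum_{j=k}^r\binom{2k-4}{j}$ groups, and then check this is at most $\frac{2^{i(k-1)}}{(k-1)!}\sum_{j=k}^r\binom{2k-2}{j}$ using $2^i\ge k$ (so $2^{i(k-1)}\ge (k/2)^{k-1}\ge (k-1)!$). This is exactly the secondary case the paper's proof handles explicitly; with it added, your argument is complete.
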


For $k = r = 2$, Theorem \ref{splittinglem} corresponds to Theorem B. Note that Theorem B easily implies the following, which appears
implicitly in Pach and Tardos~\cite{PT}:

{\bf Theorem C.} {\em For each real $\alpha \geq 1$, $d > 0$ and $n>1$, if $G$ is an ordered $n$-vertex graph with $e(G) = dn^{\alpha}$,
then for some $m \in [n]$, $G$ contains  an interval bipartite subgraph $G'$ with parts of size at most $m$ and
\begin{equation}\label{graphsplit}
e(G') = \left\{\begin{array}{ll}
\Omega\Bigl(\displaystyle{\frac{dm^{\alpha}}{\log_2 n}}\Bigr) & \mbox{ if } \alpha = 1 \\ \\
\Omega(dm^{\alpha}) & \mbox{ if } \alpha > 1
\end{array}\right.
\end{equation}
}

As observed by Pach and Tardos~\cite{PT}, the logarithmic factor in (\ref{graphsplit}) for $\alpha = 1$ is necessary: for
the ordered path $P$ with edges $\{v_i,v_{i + 1}\} : 1 \leq i \leq 4$ such that $v_2 < v_4 < v_3 < v_1 < v_5$, extremal $n$-vertex ordered $P$-free
graphs have $n\log n + O(n)$ edges, whereas an extremal $n$-vertex interval bipartite $P$-free graph has $\Theta(n)$ edges (see F\"{u}redi~\cite{Furedi},
Bienstock and Gy\"{o}ri~\cite{BG}, and Tardos~\cite{Tardos}).

Our second main result is the following generalization of Theorem C to ordered $r$-graphs:

\begin{thm} \label{splitting}
Let $2 \leq k \leq r \leq n$ be integers and let $\alpha$ be a real number with $k - 1 \leq \alpha \leq r$. Then every ordered $r$-graph $H$ with $n$ vertices and
$dn^{\alpha}$ edges has an interval $k$-partite subgraph $H'$ with parts of size at most $m$ for some $m \in [n]$ and
\begin{equation}\label{one}
 e(H') = \left\{ \begin{array}{ll}
\displaystyle{\Omega\Bigl(\frac{dm^{\alpha}}{\log_2 n}\Bigr)} & \mbox{ if }\alpha = k - 1\\ \\
\displaystyle{\Omega(dm^{\alpha})} & \mbox{ if }\alpha > k - 1
\end{array}\right.
\end{equation}
\end{thm}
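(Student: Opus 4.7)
The plan is to derive the theorem from Theorem \ref{splittinglem} by averaging and a geometric summation. Set $L = \lfloor \log_2 n\rfloor$ and $m_i = \lceil n/2^i\rceil$ for $0 \le i \le L$, and let $C_0 = C_0(k,r) := \tfrac{1}{(k-1)!}\sum_{j=k}^r \binom{2k-2}{j}$ denote the constant appearing in Theorem \ref{splittinglem}. First I would apply Theorem \ref{splittinglem} to partition the edges of $H$ into edge-disjoint ordered $r$-graphs $H_0, \ldots, H_L$, where each $H_i$ is a union of at most $N_i := C_0 \cdot 2^{i(k-1)}$ interval $k$-partite $r$-graphs, each with parts of size at most $m_i$.

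Next, for each $i$, an averaging argument produces an interval $k$-partite subgraph $H'_i \subseteq H_i$ with parts of size at most $m_i$ and $e(H'_i) \ge e(H_i)/N_i$. I would then argue by contradiction: if every $H'_i$ violates the desired conclusion, so that $e(H'_i) < c\, d m_i^\alpha$ for some small constant $c = c(\alpha,k,r) > 0$ to be chosen, then using the elementary bound $m_i \le 2n/2^i$ (valid for $i \le L$ since $2^i \le n$), we get $e(H_i) < c \cdot C_0 \cdot 2^\alpha \cdot d n^\alpha \cdot 2^{i(k-1-\alpha)}$. Summing over $i$ and using $e(H) = d n^\alpha$ gives
\[ 1 \;<\; c \cdot C_0 \cdot 2^\alpha \cdot \sum_{i=0}^{L} 2^{i(k-1-\alpha)}. \]

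When $\alpha > k-1$ the geometric series on the right is bounded by a constant $A = A(\alpha,k)$ independent of $n$, and choosing $c$ strictly smaller than $(C_0 \cdot 2^\alpha A)^{-1}$ produces the required contradiction. This yields some index $i$ for which $H'_i$ has $\Omega(d m_i^\alpha)$ edges, and one takes $m := m_i$. When $\alpha = k-1$ every term of the sum equals $1$, so the sum is $L+1 = \Theta(\log_2 n)$, and the same contradiction argument with $c$ replaced by $c'/\log_2 n$ delivers the weaker bound $e(H'_i) = \Omega(d m_i^\alpha / \log_2 n)$ recorded in (\ref{one}).

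I do not anticipate a substantial obstacle, since the real combinatorial work is encapsulated in Theorem \ref{splittinglem} and the remainder is a pigeonhole-plus-summation exercise. The items requiring attention are the uniformity of the hidden constants (they depend only on $\alpha, k, r$, which is immediate from the boundedness of $C_0$, $2^\alpha$, and $A(\alpha,k)$), and the observation that the hypothesis $\alpha \ge k-1$ is precisely what keeps the geometric sum controlled: for $\alpha < k-1$ the sum would blow up exponentially in $L$ and this approach must fail, which is consistent with the sharpness claim in the abstract.
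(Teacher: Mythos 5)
Your proposal is correct and follows essentially the same route as the paper: both derive the result from Theorem \ref{splittinglem} by taking the densest interval $k$-partite piece at each scale $i$ and running a proof by contradiction that reduces to summing the geometric series $\sum_i 2^{i(k-1-\alpha)}$, which converges exactly when $\alpha > k-1$ and contributes the $\log_2 n$ factor when $\alpha = k-1$. The only differences are cosmetic (you use $m_i = \lceil n/2^i\rceil$ with the crude bound $m_i \le 2n/2^i$ and an unspecified constant, while the paper uses $m_i = 2^{g-i}$ and makes the constant $(1-2^{k-1-\alpha})/C$ explicit).
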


The case $k = r = 2$  is Theorem C.

{\bf Remarks.}
\vspace{-5mm}
\begin{itemize}
	\itemsep0.3em

	\item Theorem~\ref{splitting} is sharp in that for $2 \leq k < r$ and $\alpha = k - 1$, there exist $n$-vertex $r$-graphs $H$ with $e(H)  = dn^{\alpha}$
where every interval $k$-partite  subgraph $H'$ with parts of size $m$ has $e(H') = O(dm^{\alpha}/\log n)$, and for $\alpha < k - 1$, there exist $n$-vertex $r$-graphs $H$ with $e(H) = dn^{\alpha}$
where every interval $k$-partite  subgraph $H'$ has $e(H') =  O(dn^{\alpha - a})$ where $a = \min\{1,k-1-\alpha\} > 0$. We will prove this in Section~\ref{construction} (see Constructions 1 and 2).

\item For $\alpha > k - 1$, Theorem \ref{splitting} guarantees that an $n$-vertex ordered $r$-graph with $\Theta(n^{\alpha})$ edges has an
interval $k$-partite subgraph parts of size $m$ and $\Theta(m^{\alpha})$ edges for some $m \in [n]$. In sharp
contrast with the Erd\H os--Kleitman Lemma, Proposition A, the value of $m$ may be necessarily be small relative to the number of
vertices in the host $r$-graph: we give a construction in Section \ref{construction} (see Construction 3) where we need $m = O(n^{1-1/\alpha})$ for $\alpha > k - 1$.

\item We do not optimize the constant $c = c(\alpha,k,r)$ in the bound $e(H') \geq cdm^{\alpha}$ for $\alpha > k - 1$ in Theorem \ref{splitting}.
The proof of Theorem \ref{splitting} gives
\begin{equation}\label{cvalue}
c(\alpha,k,r) \geq \frac{(k-1)!(1 - 2^{k-1-\alpha})}{\sum_{j = k}^r {2k - 2 \choose j}}.
\end{equation}
In particular, $c(r,r,r) \geq (r-1)!4^{-r}$, whereas
 for every $r$-partite subgraph $H'$ of $K_n^r$ with parts of size $m$,
$e(H') \leq m^r$, and so $c(r,r,r) \leq r!$.

\item For each partition $\pi$ of $r$, one can extend Theorem \ref{splitting} to the setting of {\em interval $\pi$-partite subgraphs} -- here $\pi$ specifies the number of vertices of an edge in each part -- by replacing the range of $\alpha$ to $\alpha\ge f(\pi)$ where $f(\pi)$ is the maximum length of a partition that is not a refinement of $\pi$. For example, if $\pi= 1+1+\cdots + 1$, then $f(\pi) = r-1$, if  $\pi= 2+1+\cdots + 1$, then $f(\pi) = r-2$ and if $\pi = 1+(r-1)$, then $f(\pi) = \lfloor r/2 \rfloor$. This has other interesting consequences which we will explore in forthcoming work.

\end{itemize}

\subsection{Applications of Theorem~\ref{splitting}}

We next describe how to apply Theorem \ref{splitting} to a variety of ordered extremal problems and convex geometric extremal problems for families of $r$-graphs. This enables us to transfer  classical extremal problems  to the ordered setting via Theorem \ref{splitting}. The following definition is needed:

\begin{definition} For an $r$-partite $r$-graph $F$, $\ord(F)$ denotes the family of interval $r$-partite $r$-graphs
 isomorphic to $F$.
For a family $\mathcal{F}$ of $r$-partite $r$-graphs,  $\ord(\mathcal{F}) =\bigcup_{F \in \mathcal{F}}\ord(F)$.
\end{definition}

Note that $\ord(\mathcal{F})$ may be empty but not in the cases we investigate. A first and natural example is the case that $\mathcal{F}$ consists of the $r$-graph of two disjoint edges. The Erd\H{o}s-Ko-Rado Theorem~\cite{EKR} states that for $n \geq 2r + 1$, the unique extremal $n$-vertex $r$-graph without two disjoint edges consists of
all $r$-sets containing one vertex, with ${n - 1 \choose r - 1}$ edges. In~\cite{FJKMV2}, the following ordered version of the Erd\H{o}s-Ko-Rado Theorem is proved:

\begin{thm} {\bf (\cite{FJKMV2})} \label{ekr}
Let $r \geq 3$ and $n \geq 2r + 1$. Then the maximum number of edges in an ordered $n$-vertex $r$-graph that does not contain
two edges of the form $\{v_1,v_2,\dots,v_r\}$ and $\{w_1,w_2,\dots,w_r\}$ such that $v_1 < w_1 < v_2 < w_2 < \dots < v_r < w_r$
is exactly ${n \choose r} - {n - r \choose r}$.
\end{thm}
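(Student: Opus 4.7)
I propose to take $H^*$ to be all $r$-subsets $\{v_1 < v_2 < \cdots < v_r\}$ of $[n]$ satisfying either $v_1 = 1$ or $v_{i+1} = v_i + 1$ for some $i$. The substitution $u_i = v_i - i$ provides a bijection between the complement of $H^*$ in $\binom{[n]}{r}$ (the $r$-sets with $v_1 \geq 2$ and every gap at least $2$) and $\binom{[n-r]}{r}$, so $|H^*| = \binom{n}{r} - \binom{n-r}{r}$. If two edges $\{v_1,\dots,v_r\}, \{w_1,\dots,w_r\} \in H^*$ satisfied $v_1 < w_1 < v_2 < \cdots < v_r < w_r$, the second would have $w_1 \geq 2$ and $w_{i+1} > v_{i+1} > w_i$, hence $w_{i+1} \geq w_i + 2$, placing it outside $H^*$. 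So $H^*$ realizes the claimed lower bound and is alternation-free.

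\textbf{Upper bound.} To prove $|H| \leq \binom{n}{r} - \binom{n-r}{r}$ for every alternation-free $H$, let $\mathcal{A}$ be the graph on $\binom{[n]}{r}$ whose edges are the forbidden alternating pairs; since $\alpha(\mathcal{A}) + \tau(\mathcal{A}) = \binom{n}{r}$ the claim is equivalent to $\tau(\mathcal{A}) \geq \binom{n-r}{r}$. The family $B := \binom{[n]}{r} \setminus H^*$ has size $\binom{n-r}{r}$ and is a vertex cover (the larger edge of any alternating pair lies in $B$, as noted above), yielding $\tau(\mathcal{A}) \leq \binom{n-r}{r}$. For the reverse inequality I plan to exhibit a fractional matching of value $\binom{n-r}{r}$: decompose $B$ into downshift orbits $\{e_0, e_0 + \mathbf{1}, e_0 + 2\mathbf{1}, \ldots\}$, each rooted at the unique orbit element $e_0 \in H^*$ with $v_1(e_0) = 1$. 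Consecutive members of each orbit alternate (the gap-at-least-$2$ condition gives the required strict interleaving), so each orbit contributes a path in $\mathcal{A}$, and I aim to sum appropriately weighted contributions from these paths and from cross-orbit alternating pairs to produce a fractional matching of total weight $|B|$; by LP duality this forces $\tau(\mathcal{A}) \geq \binom{n-r}{r}$.

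\textbf{Main obstacle.} The crux is the cross-orbit matching: within a single orbit of length $L+1$ a path matching gives only $\lceil L/2 \rceil$, roughly half the orbit's bad edges, so alternating pairs between edges in different orbits (which occur whenever gap patterns are compatible) must be folded in with the correct weights. A cleaner alternative would be a direct injection $\Psi: B \hookrightarrow \binom{[n]}{r} \setminus H$: for $e = \{w_1 < \cdots < w_r\} \in B \setminus H$ set $\Psi(e) = e$, and for $e \in B \cap H$ let $\Psi(e)$ be a canonically chosen good alternating partner -- necessarily of the form $\{1, v_2, \ldots, v_r\}$ with $v_i \in (w_{i-1}, w_i)$, since a consecutive pair $v_{j+1} = v_j + 1$ in any partner would force $w_j$ strictly between two consecutive integers. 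Guaranteeing injectivity of $\Psi$, either by a lexicographic selection rule supported by a Hall-type condition on the bipartite graph between $B$ and its good partners, or by a shifting operator on $H$ that preserves both alternation-freeness and $|H|$ while moving $H$ toward the canonical form $H^*$, is the main technical step I would have to work out carefully.
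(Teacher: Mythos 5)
This theorem is not proved in the present paper at all: it is imported from \cite{FJKMV2}, so there is no in-paper argument to compare against, and your proposal has to stand on its own. Its first half does: the family $H^*$ of $r$-sets that contain $1$ or a consecutive pair is counted correctly via $u_i=v_i-i$, and the observation that the second member $\{w_1,\dots,w_r\}$ of any alternating pair must satisfy $w_1\geq 2$ and $w_{i+1}\geq w_i+2$ (because $v_{i+1}$ lies strictly between $w_i$ and $w_{i+1}$) correctly shows both that $H^*$ is extremal-candidate and that $B=\binom{[n]}{r}\setminus H^*$ is a vertex cover of the conflict graph $\mathcal{A}$. The upper bound, however, is the entire content of the theorem, and you leave its key step open by your own admission. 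That is a genuine gap, not a routine verification.

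Worse, the first route you propose cannot work. The same interleaving observation applies to \emph{both} members of an alternating pair: if $v_1<w_1<v_2<\cdots<v_r<w_r$ then also $v_{i+1}\geq v_i+2$, so every non-isolated vertex of $\mathcal{A}$ is an $r$-set with no two consecutive elements, and there are only $\binom{n-r+1}{r}$ of these. Any fractional matching is supported on non-isolated vertices, hence has value at most $\tfrac12\binom{n-r+1}{r}$, and since $\binom{n-r+1}{r}/\binom{n-r}{r}=\tfrac{n-r+1}{n-2r+1}<2$ for all $n\geq 3r$, we get $\nu^*(\mathcal{A})<\binom{n-r}{r}\leq\tau(\mathcal{A})$ in that range: LP duality can never certify the required cover number (already for $r=2$, $n=6$ one finds $9$ non-isolated vertices but $\tau=6$). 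The injection route is the right shape, but its difficulty is exactly where you stop: the pool of ``partners containing $1$'' has size $\Theta(n^{r-1})$ while $|B|=\Theta(n^{r})$, so Hall's condition fails on the full bipartite graph between $B$ and these partners, and a correct argument must exploit that $H$ is alternation-free to control $|B\cap H|$ (e.g.\ by chaining elements of $B$ to one another), which is essentially the theorem itself. As written, the proposal establishes only the lower bound.
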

For an ordered $r$-graph $F$, let $\ex_{\to}(n, F)$  denote the maximum number of edges in an $n$-vertex ordered $r$-graph  that does not contain $F$.
For a family  $\mathcal{F}$ of  ordered $r$-graphs,  let $\ex_{\to}(n, \mathcal{F})=\min_{F\in \mathcal{F}}\ex_{\to}(n, F)$.
In this language Theorem~\ref{ekr} implies that  for $n \geq 2r + 1$,
\[ \ex_{\to}(n,\ord(F)) \le {n \choose r} - {n - r \choose r},\]
 where $F$ is the $r$-graph comprising two disjoint edges (in fact, it applies to a particular member of $\ord(F)$).
Results for hypergraph matchings (i.e., for sets of disjoint edges) by Klazar and Marcus~\cite{KM} show that for each interval $r$-partite matching $M$,
$\ex_{\to}(n,M) = O(n^{r - 1})$, thereby extending the celebrated Marcus-Tardos~\cite{MT} theorem for matchings in ordered graphs to ordered $r$-graphs. We now give some further examples where classical extremal problems are transferred to the ordered setting via Theorem \ref{splitting}.

\medskip

\subsubsection{ Simplices} 
A {\em $d$-dimensional $r$-simplex} is an $r$-graph of $d + 1$ edges such that any $d$ of the edges have non-empty intersection,
but all $d + 1$ edges have empty intersection.
Denote by $\mathcal{S}_{d}^r$ the family
of $d$-dimensional $r$-simplices. The set  $\mathcal{S}_{d}^r$ is non-empty if $r\geq d$.
The study of these abstract simplices in the context of extremal hypergraph theory was first initiated by Chv\'atal who posed the following conjecture.

\begin{conjecture} {\bf (Chv\'atal~\cite{CH})} \label{chv}
Let $r\ge d+1\ge 3$ and $n \ge r(d+1)/d$. Then $\ex(n,  \mathcal{S}_{d}^r) = {n-1 \choose r-1}$.
\end{conjecture}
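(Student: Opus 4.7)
The plan is to attack Conjecture~\ref{chv} via the shifting method combined with a structural analysis in the spirit of the classical Erd\H{o}s--Ko--Rado proof. First, I would apply the left-shifting operation $S_{ij}$ repeatedly to an extremal family $\mathcal{H}\subseteq {[n] \choose r}$ avoiding every member of $\mathcal{S}_d^r$, until $\mathcal{H}$ is stable under all shifts. Shifting preserves the number of edges and, by a standard argument, does not create new $d$-simplices, so it suffices to prove the bound $|\mathcal{H}|\leq {n-1 \choose r-1}$ under the additional assumption that $\mathcal{H}$ is shifted.

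The natural dichotomy is then the following: either every edge of $\mathcal{H}$ contains the vertex $1$, giving $|\mathcal{H}|\leq {n-1 \choose r-1}$ immediately, or some edge $A_0$ avoids $1$. In the latter case I would use shiftedness together with the quantitative assumption $|\mathcal{H}|> {n-1 \choose r-1}$ to locate $d$ further edges $A_1,\ldots,A_d$ all containing $1$, chosen so that their intersection pattern with $A_0$ realizes a $d$-simplex. The key inputs would be lower bounds, obtained via Kruskal--Katona or the delta-system method of Frankl--F\"uredi, on shadows and on $(d-1)$-wise intersections among the edges through $1$, together with the hypothesis $n\geq r(d+1)/d$, which is precisely what makes the requisite pigeonhole argument go through. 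The case $d=1$ reduces to Erd\H{o}s--Ko--Rado, and the case $d=r-1$ is handled by Frankl's theorem and subsequent refinements for large $n$.

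The main obstacle is the middle regime $1<d<r-1$, where one must simultaneously control $d+1$ edges whose intersection pattern must exhibit both the pairwise-nonempty and globally-empty conditions defining an element of $\mathcal{S}_d^r$. Shifting alone seems insufficient here, and a delta-system reduction, a junta-style approximation, or a stability-plus-perturbation strategy appears necessary to force the required configuration out of a near-extremal family. In fact Conjecture~\ref{chv} is a long-standing open problem, known in full only in a handful of special cases, so I would not expect the outline above to deliver the conjecture in its general form. Nevertheless it identifies the natural starting point and pinpoints where new ideas are required, and it meshes cleanly with the ordered analogues to which Theorem~\ref{splitting} of this paper is about to be applied.
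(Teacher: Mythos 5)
The statement you were asked about is labeled as a \emph{conjecture} in the paper, and the paper gives no proof of it: Conjecture~\ref{chv} is Chv\'atal's simplex conjecture, stated only as background for Theorem~\ref{simplex} (the ordered, order-of-magnitude analogue that the authors actually prove). You have correctly recognized this. Your write-up is not a proof and does not claim to be one; it is a survey of the standard lines of attack (shifting, delta-systems/Kruskal--Katona, the junta method) together with an accurate account of which cases are known --- $d=1$ via Erd\H{o}s--Ko--Rado, large $n$ via Frankl--F\"uredi and Keller--Lifshitz, $d=2$ via Mubayi--Verstra\"ete, and $n\ge 2r$ via Currier --- which matches the paper's own discussion. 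The one technical caveat worth flagging in your sketch is the assertion that shifting ``does not create new $d$-simplices'': preservation of simplex-freeness under shifting is genuinely delicate (the intersection pattern of a simplex involves a global empty-intersection condition, not just pairwise conditions), and handling this is one of the substantive contributions of the Frankl--F\"uredi approach rather than ``a standard argument.'' But since the conjecture remains open in general and the paper neither proves it nor relies on it (Theorem~\ref{simplex} is proved independently by induction on $d$ using Theorem~\ref{splitting}), there is nothing to compare your proposal against: the honest answer, which you gave, is that no proof exists to reproduce.
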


Frankl and F\"{u}redi~\cite{FF} proved Conjecture~\ref{chv} for large $n$ (Keller and Lifschitz~\cite{KL} improved the bounds on $n$) and Mubayi and Verstra\"ete~\cite{MV2005} proved it for $d=2$,
which was a problem of Erd\H os. Very recently, Currier~\cite{Currier} proved the conjecture for $n \geq 2r$. We prove the following theorem.

\begin{thm}\label{simplex}
For all fixed $r \ge  d+1 \geq 3$,
\[ \ex_{\to}(n,\ord(\mathcal{S}_{d}^r)) = \Theta(n^{r-1}).\]
\end{thm}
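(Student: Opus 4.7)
The lower bound is immediate: the star $\{e\in\binom{[n]}{r}:1\in e\}$ has $\binom{n-1}{r-1}=\Omega(n^{r-1})$ edges and contains no $d$-simplex in any ordering, since any $d+1$ of its edges share vertex~$1$ and hence have nonempty common intersection.

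For the upper bound, let $H$ be an $n$-vertex ordered $r$-graph avoiding every member of $\ord(\mathcal{S}_d^r)$. The crux of the argument is the following \emph{inheritance principle}: if $G\subseteq H$ is interval $r$-partite (with intervals $I_1<\cdots<I_r$), then $G$ is $\mathcal{S}_d^r$-free even as an unordered hypergraph. Indeed, every edge of $G$ meets each $I_j$ in exactly one vertex, so any copy of a $d$-simplex inside $G$ is automatically interval $r$-partite and hence a forbidden element of $\ord(\mathcal{S}_d^r)$. Combined with the classical Frankl--F\"uredi bound $\ex(N,\mathcal{S}_d^r)=O(N^{r-1})$ (sharpened by Currier~\cite{Currier} for $N\ge 2r$), this yields $e(G)\le C_1 m^{r-1}$ for any interval $r$-partite $G\subseteq H$ with parts of size at most $m$, where $C_1=C_1(r,d)$ is a constant.

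I would then apply Theorem~\ref{splittinglem} with $k=r$ to write $H=\bigcup_{i=0}^{\lfloor\log_2 n\rfloor}H_i$, where each $H_i$ is a union of at most $C_r\cdot 2^{i(r-1)}$ interval $r$-partite subgraphs with parts of size at most $\lceil n/2^i\rceil$ and $C_r=\binom{2r-2}{r}/(r-1)!$. Applying the inheritance bound to each summand gives $e(H_i)\le C_r\cdot 2^{i(r-1)}\cdot C_1(r\lceil n/2^i\rceil)^{r-1}=O(n^{r-1})$ per level, hence $e(H)=O(n^{r-1}\log n)$.

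The principal obstacle—and the last step of the argument—is removing the spurious $\log n$ factor to match the claimed $\Theta(n^{r-1})$. For this I would replace Theorem~\ref{splittinglem} by Theorem~\ref{splitting} with $k=r$ and a fixed $\alpha>r-1$: assuming toward contradiction $e(H)\ge Cn^{r-1}$ for a large constant $C$, write $e(H)=dn^\alpha$ with $d=Cn^{r-1-\alpha}$, and let $H'\subseteq H$ be the interval $r$-partite subgraph with parts of size at most $m$ and $e(H')\ge cdm^\alpha$ guaranteed by Theorem~\ref{splitting}. Together with the inheritance bound $e(H')\le C_1 m^{r-1}$ and the trivial bound $e(H')\le m^r$, this constrains the pair $(m,d)$ into an interval that becomes empty once $C$ exceeds a threshold depending only on $r$ and $d$. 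The delicate point is the regime where Theorem~\ref{splitting} outputs a very small $m$, which requires simultaneously invoking $e(H')\le m^r$ (to force $m\ge (cd)^{1/(r-\alpha)}$) and inheritance (to force $m\le (C_1/(cd))^{1/(\alpha-r+1)}$); balancing these two sides is where I expect the main technical care to be needed, and likely forces the use of a supersaturation or stability refinement of the Frankl--F\"uredi bound inside each interval $r$-partite subgraph.
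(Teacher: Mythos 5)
Your lower bound (the star) and your ``inheritance principle'' are both correct, and inheritance combined with Theorem~\ref{splittinglem} does give $\ex_{\to}(n,\ord(\mathcal{S}_d^r))=O(n^{r-1}\log n)$. The genuine gap is the last step, which is the entire difficulty of the theorem: applying Theorem~\ref{splitting} with $k=r$ and a fixed $\alpha>r-1$ cannot remove the logarithm. If $e(H)=Cn^{r-1}$, then $d=Cn^{r-1-\alpha}\to 0$, and the two constraints you propose to play off against each other are (i) from $e(H')\le m^{r}$: $m\ge (cd)^{1/(r-\alpha)}=(cC)^{1/(r-\alpha)}n^{-(\alpha-r+1)/(r-\alpha)}$, a quantity tending to $0$, hence vacuous; and (ii) from inheritance $e(H')\le C_1(rm)^{r-1}$: $m\le \bigl(C_1r^{r-1}/(cC)\bigr)^{1/(\alpha-r+1)}\,n$, which for large $n$ is weaker than needed. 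The interval for $m$ is never empty --- indeed a single edge is an interval $r$-partite subgraph with $m=1$ and $e(H')=1\ge cdm^{\alpha}$ once $n$ is large, so Theorem~\ref{splitting} in this regime is essentially contentless no matter how large you take $C$. No supersaturation or stability refinement of Frankl--F\"uredi inside the parts can rescue this, because the loss occurs already at the level of the splitting theorem: with $k=r$ and density $\Theta(n^{r-1})$ you are exactly at the threshold $\alpha=k-1$, where Construction~1 shows the $\log n$ loss is unavoidable for the splitting step itself.

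The paper escapes this by taking \emph{fewer} parts: it applies Theorem~\ref{splitting} with $k=\alpha=r-1$, so that $d=e(H)/n^{r-1}$ is a large \emph{constant} and the log-free case $\alpha>k-1$ applies, producing a genuinely dense interval $(r-1)$-partite subgraph $G$. The price is that your inheritance principle no longer holds --- a simplex inside an interval $(r-1)$-partite graph need not be interval $r$-partite --- so a further argument is required. The paper strengthens the statement to \emph{strong} $d$-dimensional simplices (a $d$-simplex plus one extra edge meeting every $d$-wise intersection) and inducts on the dimension: after deleting the few edges of $G$ whose $(r-1)$-set in the big part has multiplicity at most $2$, it picks a vertex $x$ of high degree in the singleton part, finds a strong $(d-1)$-simplex in the link of $x$ by induction, and lifts it back using the multiplicity to replace $x$ in one edge. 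Some additional structural idea of this kind is indispensable; the splitting theorem with $k=r$ alone cannot deliver the clean $O(n^{r-1})$ bound.
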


\subsubsection{Expansions} Our next example is more general.
If $\mathcal{F}$ is a family of $(r - 1)$-graphs, let $\mathcal{F}^+$ denote the family of $r$-graphs $F^+$ obtained from each $F \in \mathcal{F}$ by adding a vertex $v_e$ to edge $e \in F$ such that all the vertices $v_e : e \in F$ are distinct from each other
 and from the vertices of $F$. A study of extremal problems for families $\mathcal{F}^+$ is given in~\cite{MV},
where $F^+$ is referred to as an {\em expansion} of $F$. Such families lend themselves naturally to an application of Theorem \ref{splitting}:

\begin{thm}\label{expansion}
	Let $r \ge 3$ and $\mathcal{F}$ be a family of $(r - 1)$-graphs with $\ex_{\to}(n,\ord(\mathcal{F})) = O(n^{r-2})$.
	Then
	\[ \ex_{\to}(n,\ord(\mathcal{F}^+)) = O(n^{r-1}).\]
\end{thm}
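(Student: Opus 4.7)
\textbf{Proof proposal for Theorem~\ref{expansion}.}

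The plan is to combine Theorem~\ref{splitting} with the classical ``link-and-extend'' proof of the non-ordered expansion theorem. Let $C$ be a large constant depending on $\mathcal{F}$ and $r$, and suppose $H$ is an ordered $n$-vertex $r$-graph with $e(H)\ge Cn^{r-1}$; I will show $H$ contains a member of $\ord(\mathcal{F}^+)$, which proves $\ex_{\to}(n,\ord(\mathcal{F}^+))=O(n^{r-1})$.

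The first step is to apply Theorem~\ref{splitting} with $k=2$ and $\alpha=r-1$. Since $r\ge 3$ we have $\alpha>k-1$, so there is no logarithmic loss, and we obtain an interval 2-partite subgraph $H_0$ on parts $I_1<I_2$, each of size at most $m$ for some $m\in[n]$, with $e(H_0)=\Omega(Cm^{r-1})$. A pigeonhole over the splits of each edge's vertices between $I_1$ and $I_2$ produces some $s^*\in\{1,\ldots,r-1\}$ such that the subhypergraph $H_0^{(s^*,r-s^*)}$ of edges with exactly $s^*$ vertices in $I_1$ has $\Omega(Cm^{r-1})$ edges.

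In the extremal case $s^*=r-1$, each such edge has a unique $I_2$-vertex, which will serve as the ``new'' vertex of the expansion. For each ordered $(r-1)$-tuple $T\subset I_1$, set $d(T)=|\{v\in I_2:T\cup\{v\}\in H_0\}|$, so $\sum_T d(T)=\Omega(Cm^{r-1})$. Fix $K$ larger than $v(F)+e(F)$ for every $F\in\mathcal{F}$ that might be encountered, and larger than the implied constant in the hypothesis $\ex_{\to}(n,\ord(\mathcal{F}))\le K' n^{r-2}$. A standard averaging shows the set $L$ of ``heavy'' $T$ with $d(T)>K$ has cardinality at least $(\Omega(Cm^{r-1})-Km^{r-1})/m=\Omega(Cm^{r-2})$, which for $C$ sufficiently large exceeds $\ex_{\to}(m,\ord(\mathcal{F}))$. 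Hence $L$ contains an interval $(r-1)$-partite copy $F_0\in\ord(F)$ of some $F\in\mathcal{F}$. For each edge $\tilde f_i$ of $F_0$ greedily pick a distinct extension $v_i\in I_2$ with $\tilde f_i\cup\{v_i\}\in H_0$, avoiding $V(F_0)$ and the previously chosen $v_{i'}$; this succeeds because $d(\tilde f_i)>K\ge v(F)+e(F)$. Because $I_1<I_2$, the new vertices $v_i$ lie above all of $V(F_0)\subset I_1$, and appending $\{v_1,\ldots,v_s\}$ to the $r-1$ parts of $F_0$ as a final interval produces an interval $r$-partite copy of $F^+$ in $\ord(\mathcal{F}^+)$. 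The symmetric case $s^*=1$ places the new-vertex interval at the beginning.

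The main obstacle is the middle range $2\le s^*\le r-2$, which only arises for $r\ge 4$. There the new vertex cannot sit at either extreme of the interval order: both the remaining vertices of $F$ and the added $v_i$ must land on the same side of the $I_1/I_2$ split, and arranging the $v_i$ to form a separate interval placed strictly after the relevant portion of $V(F_0)$ is the delicate point. I would handle this by iterating Theorem~\ref{splitting}: applying it again to $H_0^{(s^*,r-s^*)}$ with a larger $k\le r-1$ (still within the no-log-loss regime $\alpha>k-1$) refines the interval partition, and after finitely many steps the intended new-vertex interval is isolated as a whole part of the refinement, reducing the situation to the extremal case above. An alternative route is induction on the uniformity $r$, with base case $r=3$ (which only produces the two extremal splits $s^*\in\{1,2\}$ handled above) and an inductive step that projects the middle cases onto a lower-uniformity instance of the hypothesis. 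Either approach yields $\ex_{\to}(n,\ord(\mathcal{F}^+))=O(n^{r-1})$.
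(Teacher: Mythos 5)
Your argument for the extreme splits $s^* \in \{1, r-1\}$ is sound and is essentially the paper's argument in disguise: the paper likewise isolates a singleton ``apex'' part, discards edges through $(r-1)$-sets with few extensions, finds a member of $\ord(\mathcal{F})$ among what remains (via the link of a single apex vertex rather than your hypergraph $L$ of heavy $(r-1)$-sets --- an immaterial difference), and then greedily expands each edge by a distinct heavy extension. In particular your proof is complete for $r=3$.

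The middle range $2 \le s^* \le r-2$ is a genuine gap for every $r \ge 4$, and neither of your proposed repairs is carried out or works as stated. For the iteration idea, applying Theorem~\ref{splitting} again to $H_0^{(s^*,r-s^*)}$ with some $k' \le r-1$ does not by itself ``isolate the new-vertex interval as a whole part'': even with $k'=r-1$ you obtain $\ell \ge r-1$ intervals each meeting every edge, and no part is automatically a singleton for all edges. The induction-on-$r$ idea is vaguer still, since the hypothesis concerns $(r-1)$-graphs and their $r$-uniform expansions and it is unclear what lower-uniformity instance the middle splits project onto. The correct move --- and what the paper does --- is to skip $k=2$ entirely and apply Theorem~\ref{splitting} once with $k = r-1$ and $\alpha = r-1 > k-1$ (still no logarithmic loss). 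In an interval $(r-1)$-partite $r$-graph each edge has at most one part containing two of its vertices, so each edge meets at least one of the two extreme parts in exactly one vertex; by pigeonhole at least half the edges meet the same extreme part $X$ in a single vertex, and then $Y = V \setminus X$ is again an interval with $X<Y$ or $Y<X$. Your heavy-set and greedy-expansion argument for $s^*=r-1$ then applies verbatim with $(I_1,I_2)$ replaced by $(Y,X)$ or $(X,Y)$: the copy of some $F\in\ord(\mathcal{F})$ lives in the interval $Y$, the new vertices land in the extreme interval $X$, and the resulting copy of $F^+$ is interval $r$-partite. With this single change your proof closes.
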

Actually, our proof yields a stronger fact. Recall that for a vertex $v$ in an $r$-graph $G$, the {\em link of $v$ in $G$}
is the $(r-1)$-graph $G(v)$ whose edge set  is $\{A-v;\, v\in A\in E(G)\}$. Our proof shows that for some $C$
each ordered $n$-vertex $r$-graph with at least $Cn^{r-1}$ edges contains an $r$-graph obtained from an $F_0\in \ord(\mathcal{F}^+)$ by
choosing some vertex $v_0$ of degree $1$ in $F_0$ and
adding all the edges of the kind $L\cup {v_0}$ where  $L$ is an $(r-1)$-subset of an edge in $F_0$.
 This stronger fact implies that $$\ex_{\to}(n, \ord(T_r)) = O(n^{r-1}),$$ where $T_r = \{e,f,g\}$ is the loose $r$-uniform triangle i.e. $|e \cap f| = |f \cap g| = |g \cap e| = 1$ and $e \cap f \cap g =\emptyset$.


\bigskip

\subsubsection{ Hypergraph forests}
Our next application concerns hypergraph forests.
The {\em shadow} $\partial H$ of an $r$-graph $H$ is the collection of $(r-1)$-sets contained in some edge of $H$.
We follow Frankl and F\"{u}redi~\cite{FF} for an inductive definition
of trees in hypergraphs: a single edge is a tree, and given any tree $T$ with edges $e_1,e_2,\dots,e_h$, a tree with $h + 1$ edges is obtained by selecting
$f \in \partial T$ and a vertex $x$ not in $T$, and adding the edge $f \cup \{x\}$. A {\em forest} is a subgraph of a tree.
By definition, each $2$-uniform tree (respectively, $2$-uniform forest) is a tree (respectively, forest) in the usual sense.
 Using Theorem \ref{splitting}, we prove the following:

\begin{thm} \label{r-forest} Fix $ r \ge 2$ and let $F$ be an $r$-uniform forest. Then $\ex_{\to}(n, \ord({F})) = O(n^{r-1})$.
\end{thm}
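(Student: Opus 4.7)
The plan is to prove the statement first for $r$-uniform trees; the forest case then follows because any $r$-uniform forest $F$ is a subgraph of some tree $T$, and any interval $r$-partite ordering $T^*$ of $T$ restricts to an interval $r$-partite ordering $F^*\in \ord(F)$ of $F$, giving $\ex_{\to}(n,\ord(F))\le \ex_{\to}(n,F^*)\le \ex_{\to}(n,T^*)$.

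For a tree $T$ built by the inductive construction $T_1\subset\cdots\subset T_h$, I would first fix a suitable ordering. An easy induction on the construction shows that $T$ is $r$-partite: when $T_i$ is formed from $T_{i-1}$ by adding $f_i\cup\{x_i\}$ with $f_i\subseteq e\in T_{i-1}$, assign $x_i$ the color of the unique vertex $y_i\in e\setminus f_i$ in a rainbow $r$-coloring of $T_{i-1}$. Order $V(T)$ first by color class and then within each class by the order of insertion; this makes $T^*$ interval $r$-partite with intervals $I_j$ equal to the color classes, and each new vertex $x_i$ lies at the rightmost position of its color class $I_{c(x_i)}$ among the vertices inserted up to stage~$i$.

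The core is an induction on $h=|E(T)|$ yielding a constant $C_h$ such that every ordered $n$-vertex $r$-graph $G$ with $e(G)\ge C_h n^{r-1}$ contains $T^*$. The base case $h=1$ is immediate. For the inductive step and $r\ge 3$, I would apply Theorem~\ref{splitting} with $k=r-1$ and $\alpha=r-1>k-1$ to obtain, with no logarithmic loss, an interval $(r-1)$-partite subgraph of $G$ with parts of size $\le m$ and $\Omega(C_h m^{r-1})$ edges. Pigeonholing on which of the $r-1$ parts is the ``fat'' one (contains two vertices of an edge) and then performing a bounded number of dyadic halvings of that fat part --- justified because after $O(1)$ halvings either a constant fraction of the remaining edges must straddle the current split or the surviving edges would be forced into a sub-interval too small (by the $(m/2^t)^2 m^{r-3}$ capacity bound) to contain them all --- refines the structure to an interval $r$-partite subgraph $G_0$ on parts of size $\le m$ with still $\Omega(C_h m^{r-1})$ edges. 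The case $r=2$ is handled separately by the analogous cleaning-and-greedy argument in ordered graphs.

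Inside $G_0$, I would preprocess by iteratively discarding any $(r-1)$-subset of the form required for $\phi(f_h)$ whose right-codegree into the $x_h$-slot of $I_{c(x_h)}$ falls below $D:=v(T)$; this removes at most $Dm^{r-1}$ edges, so for $C_h$ chosen large enough in terms of $C_{h-1}$ and $D$ the cleaned subgraph still satisfies the inductive hypothesis and contains a copy of $T^*_{h-1}$, which can be greedily extended to a copy of $T^*_h$ because at least $D\ge v(T)$ candidates for $\phi(x_h)$ are available at the relevant $f_h$-slot and at most $v(T)-1$ of them can collide with previously embedded vertices. The main obstacle is the interplay between the logarithmic loss lurking in Theorem~\ref{splitting} at the borderline $\alpha=k-1$ (which would occur if one naively took $k=r$) and the interval $r$-partite structure required by $\ord(T)$; the key idea is to apply Theorem~\ref{splitting} with $k=r-1$ instead and then refine the resulting interval $(r-1)$-partite structure to an interval $r$-partite one at only constant multiplicative cost via the bounded-depth halving argument, which is the delicate step of the proof.
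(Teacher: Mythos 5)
The overall skeleton --- reduce to trees, apply Theorem~\ref{splitting} with $k=\alpha=r-1$, then clean low-codegree shadow sets and extend greedily by induction --- matches the paper, but your step that refines the interval $(r-1)$-partite subgraph to an interval $r$-partite subgraph $G_0$ with $\Omega(C_h m^{r-1})$ edges after only $O(1)$ dyadic halvings has a genuine gap, and it is the crux of your argument. Your capacity bound is off by a factor of $m$: a fat pair in the doubled part completes to an edge in up to $m^{r-2}$ ways (one vertex in each of the other $r-2$ parts), not $m^{r-3}$, so the number of edges confined to the $2^t$ sub-intervals is only bounded by $2^t(m/2^t)^2m^{r-2}=m^r/2^t$, which does not drop below $cm^{r-1}$ until $t\approx\log_2 m$. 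And constant depth genuinely fails: take $r=3$, parts $X_1=[m]<X_2$, and edges $\{v_1,v_2,u\}$ with $v_1<v_2\in X_1$, $v_2-v_1$ a power of $2$, and $u$ ranging over a fixed sub-interval $Y\subseteq X_2$ of size $m/\log_2 m$. This is interval $2$-partite with $\Theta(m^2)$ edges, yet for any split of $X_1$ into adjacent intervals $I_1<I_2$ of length $m'=m/2^t$, Lemma~\ref{leM} bounds the straddling power-of-$2$ pairs by $O(m')$, each completing to at most $|Y|=m/\log m$ edges, so the resulting interval $3$-partite subgraph has $O\bigl((m')^2\cdot 2^t/\log m\bigr)=o\bigl((m')^2\bigr)$ edges for bounded $t$. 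This is not a bookkeeping artifact: passing from interval $(k-1)$-partite to interval $k$-partite at exponent $\alpha=k-1$ is exactly the borderline case of Theorem~\ref{splitting}, where Construction~1 shows a logarithmic loss is in general unavoidable, so no refinement of this shape can be expected to preserve the density up to a constant.

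The paper sidesteps the refinement entirely: it embeds the tree directly into the interval $(r-1)$-partite host, in which every edge has two vertices in one fixed ``fat'' interval $X_i$, and the interval $r$-partite structure of the copy of $T$ is created by the embedding itself. The device is to record, for each shadow $(r-1)$-set $f$, the $t$ smallest and $t$ largest extensions $S(f)$ and $L(f)$ in the original graph, delete those edges, find a copy of $T-y$ in the remainder by induction, and then attach the leaf $y$ via a vertex of $S(f)$ or $L(f)$ chosen to lie strictly below or strictly above the already-embedded vertex in the relevant half of $X_i$; this splits $X_i$ into the two consecutive parts $A_{i-1}<A_i$ of the ordered copy on the fly. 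To repair your proof you would need to replace the bounded-depth halving with an argument of this kind (or otherwise prove the refinement at some data-dependent scale $m'$, which your capacity computation does not do).
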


{\bf Remarks.}
\vspace{-5mm}
\begin{itemize}
	\itemsep0.3em
	\item A conjecture of Pach and Tardos~\cite{PT} would imply $\ex_{\to}(n,T) = n^{1 + o(1)}$ for every $2$-interval-partite tree $T$
	with at least two edges. Theorems \ref{expansion} and \ref{r-forest} suggest that perhaps
	for every interval $r$-partite $r$-uniform tree $T$, $\ex_{\to}(n,T) \leq n^{r - 1 + o(1)}$.
	\item It remains an intriguing open problem to determine for which $r$-graph families $\cal F$  \begin{equation} \label{fails} \ex(n, {\cal F}) = O(n^{r-1}) \qquad \Longrightarrow \qquad  \ex_{\to}(n, \ord({\cal F})) = O(n^{r-1}).\end{equation}
	According to Theorem \ref{r-forest}, this is true for $r = 2$. Since for every $r$-uniform forest $F$, $\ex(n,F) = O(n^{r - 1})$,   Theorem \ref{r-forest} yields that
	the above implication is also true if $\mathcal{F}$ contains an $r$-uniform forest. We do not know any explicit  example for $r \geq 3$ for which (\ref{fails}) fails, although we believe that many such examples exist.
	
	\item In~\cite{FJKMV}, we
	heavily used the $k=r-1$ case of Theorem~\ref{splitting} to prove that the extremal function of so called {\em crossing paths} in
	convex geometric hypergraphs  has order $n^{r-1}$ or $n^{r-1} \log n$.
	
\end{itemize}

\subsubsection{Ordered Ruzsa-Szemer\'{e}di Theorem} 
We consider the ordered version of the famous Ruzsa-Szemer\'edi $(6,3)$-Theorem~\cite{RS} which states that the maximum number of edges in an $n$-vertex 3-graph with no 6 vertices spanning 3 edges is $o(n^2)$. This is equivalent to  the statement $\ex(n, {\cal F}_{RS}) = o(n^2)$
where ${\cal F}_{RS} = \{I_2, T_3\}$ and $I_2$ is the 3-graph comprising two edges sharing exactly two points.

\begin{thm} \label{thmrsz} Let ${\cal F}_{RS} = \{I_2, T_3\}$. Then  $\ex_{\to}(n, \ord({\cal F}_{RS})) = o(n^{2})$.
\end{thm}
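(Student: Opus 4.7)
The plan is to reduce this ordered Ruzsa--Szemer\'edi statement to the classical $(6,3)$-theorem via the splitting decomposition of this paper. The key structural observation is that in any interval 3-partite 3-graph $P$ with parts $I_1 < I_2 < I_3$, every subgraph isomorphic to $I_2$ is automatically a member of $\ord(I_2)$, and every subgraph isomorphic to $T_3$ is automatically a member of $\ord(T_3)$. This is a short case analysis: since each edge of $P$ has exactly one vertex in each part, any $I_2$ in $P$ has its two shared vertices in two distinct parts and its two differing vertices in the third part (yielding the $(1,1,2)$-form of $\ord(I_2)$), and any $T_3$ in $P$ has its three pairwise intersection vertices in three distinct parts. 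Consequently, avoiding $\ord(\mathcal{F}_{RS})$ forces every interval 3-partite subgraph of $H$ to be $\{I_2, T_3\}$-free as an unordered 3-graph, and the classical Ruzsa--Szemer\'edi theorem then gives $e(P) \leq \ex(3N, \mathcal{F}_{RS}) = o(N^2)$ for any such piece on at most $3N$ vertices.

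Applying Theorem~\ref{splittinglem} with $k = r = 3$ writes $H$ as the edge-disjoint union $\bigcup_{i=0}^{\lfloor \log_2 n\rfloor} H_i$, where $H_i$ is a union of at most $2 \cdot 4^i$ interval 3-partite 3-graphs with parts of size at most $N_i := \lceil n/2^i \rceil$. Writing $\ex(3N, \mathcal{F}_{RS}) = N^2 \phi(N)$ with $\phi(N) \to 0$, the structural observation combined with the inequality $4^i N_i^2 \leq 4 n^2$ yields $e(H_i) \leq 2 \cdot 4^i \cdot 9 N_i^2 \phi(N_i) \leq 72\, n^2 \phi(N_i)$ and hence $e(H) \leq 72\, n^2 \sum_{i=0}^{\lfloor \log_2 n\rfloor} \phi(N_i)$. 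The goal is then to show the sum tends to $0$ as $n \to \infty$.

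Given $\epsilon > 0$, I would split the sum at a threshold $N^* = N^*(\epsilon, n)$ chosen using a quantitative form of classical Ruzsa--Szemer\'edi (such as Fox's bound $\phi(N) = O(2^{-\Omega(\log^{*} N)})$) so that $\phi(N) \leq \epsilon/(C \log n)$ for every $N \geq N^*$: the $O(\log n)$ levels with $N_i \geq N^*$ then contribute $O(\epsilon/C)$ to the sum, while the $O(\log N^*)$ levels with $N_i < N^*$ need a more careful bound using the linearity forced by $\ord(I_2)$-freeness inside each interval 3-partite piece. The main obstacle is precisely this small-scale contribution: a naive bound here yields an $O(\log N^*)$ term that is not automatically $o(1)$, so a sufficiently strong quantitative Ruzsa--Szemer\'edi estimate, or a refined argument iterating the splitting theorem on each small-scale piece, is needed to absorb it. Balancing the large-scale and small-scale contributions to obtain $o(n^2)$ is the most delicate step of the proof.
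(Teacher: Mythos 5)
Your structural observation is correct (in an interval $3$-partite $3$-graph with one vertex per part, every unordered copy of $I_2$ or $T_3$ is automatically a member of $\ord(I_2)$ or $\ord(T_3)$), and the paper uses it too in its final step. But the obstacle you flag at the end is not a technicality --- it is fatal to the route you chose. Decomposing $H$ via Theorem~\ref{splittinglem} with $k=r=3$ and bounding each level by Ruzsa--Szemer\'edi puts you exactly at the critical exponent $\alpha=2=k-1$, where a logarithmic loss is unavoidable (Construction 1), so you need $\sum_{i=0}^{\lfloor\log_2 n\rfloor}\phi(N_i)=o(1)$. This fails for two separate reasons. First, the bottom levels alone kill it: the last few levels have $N_i=O(1)$, where $\phi(N_i)=\Theta(1)$, so the sum is $\Omega(1)$ no matter what; and indeed a single level can carry $\Theta(n^2)$ edges (e.g.\ all triples $\{2a-1,2a,2b\}$ live at the bottom scale), so level-by-level RS gives nothing there. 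Second, even ignoring the bottom, a ``sufficiently strong quantitative Ruzsa--Szemer\'edi estimate'' would need $\phi(N)=o(1/\log N)$, which is far beyond what is known: the best upper bounds decay like $2^{-\Omega(\log^* N)}$, so $\sum_{i\le \log_2 n}\phi(n/2^i)\gtrsim (\log n)\cdot 2^{-O(\log^* n)}\to\infty$. So your bound is weaker than the trivial one.

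The missing idea is to exploit $\ord(I_2)$-freeness \emph{before} invoking RS, so that one lands on a single dense interval $3$-partite piece rather than $\log n$ of them. The paper's proof: apply Theorem~\ref{splitting} with $k=\alpha=2$ (supercritical, since $\alpha>k-1$, hence no log loss) to extract one interval $2$-partite subgraph with parts $A<B$, one vertex of each edge in $A$, and still quadratic density. Since two edges $xyz,x'yz$ with $x,x'\in A$ and $y,z\in B$ would form a member of $\ord(I_2)$, the map $e\mapsto e\cap B$ is injective, so the link-type graph $G=\{yz:\exists x\in A,\ xyz\in H\}$ on $B$ has $e(G)=e(H)\ge\epsilon n^2$. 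Theorem~B (again supercritical, $\alpha=2>1$) gives an interval bipartite $G'\subseteq G$ with $\delta n^2$ edges, which lifts to an interval $3$-partite $H'\subseteq H$ with $\delta n^2$ edges; one application of classical Ruzsa--Szemer\'edi to $H'$, combined with your structural observation, then produces a member of $\ord(T_3)$. Without this intermediate projection-and-resplit step, your scale-by-scale summation cannot be repaired.
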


\subsubsection{Forbidden ordered intersections}
Our final example addresses an $r$-graph  problem whose answer has order of magnitude $n^{\alpha}$ where $\alpha \ne r-1$. Let $I^r(\ell)$ denote the $r$-graph consisting of two edges sharing exactly $\ell$ vertices.
The study of  $\ex(n, I^r(\ell))$ was initiated by Erd\H os. Frankl and F\"uredi~\cite{FF} proved that
\begin{equation}
\ex(n, I^r(\ell))= \Theta(n^{\max \{\ell, r-\ell-1\}}) \qquad \hbox{ \rm{for $0\leq \ell\leq r-1$}}.
\end{equation}
We are able to extend this result to the ordered setting using Theorem \ref{splitting}:

\begin{thm} \label{exactell} For $r \ge 2$ and $1 \leq \ell \leq r-1$, and $\alpha = \max\{\ell, r - \lceil(\ell + 1)/2\rceil\}$
	$$ \Omega(n^{\alpha}) = \ex_{\to}(n, \ord(I^r(\ell))) =
	\left\{ \begin{array}{ll}
	O(n^{\alpha}) & \mbox{ if } \ell \mbox{ is odd} \\
	O(n^{\alpha}\log n) & \mbox{ if } \ell \mbox{ is even}.
	\end{array}\right.$$
\end{thm}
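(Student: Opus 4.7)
\textbf{Proof plan for Theorem~\ref{exactell}.}

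The plan is to establish the lower and upper bounds separately.

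For the lower bound $\Omega(n^\alpha)$, I would exhibit two ordered $r$-graphs on $n$ vertices with $\Omega(n^\alpha)$ edges avoiding $\ord(I^r(\ell))$, one per term in $\alpha=\max\{\ell,\,r-\lceil(\ell+1)/2\rceil\}$. For the $\ell$-term, fix an $(r-\ell)$-element set $F$ at one extremum of the ordering and take all $r$-subsets containing $F$: when $r-\ell>\ell$ any two such edges share more than $\ell$ vertices automatically, and when $r-\ell\le\ell$ a symmetric variant that places complementary parts of the fixed set at both extremes, together with an alignment/parity condition on the ``middle'' coordinates, forces any pair sharing exactly $\ell$ vertices to have shared vertices in positions that no interval $r$-partite embedding can accommodate. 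For the $r-\lceil(\ell+1)/2\rceil$-term, the construction is more delicate: the parameter $\lceil(\ell+1)/2\rceil$ tracks the minimum number of intervals of an interval $r$-partite $I^r(\ell)$ that must contain shared vertices, and the construction arranges edges so that every pair sharing exactly $\ell$ vertices violates this count.

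For the upper bound, I would apply Theorem~\ref{splitting} with $k$ chosen so that $\alpha\ge k-1$: strictly when $\ell$ is odd (taking $k=\alpha$, avoiding a logarithmic loss) and with equality when $\ell$ is even (taking $k=\alpha+1$, which produces the $\log n$ factor in the bound). Assume $e(H)$ exceeds a large constant $C$ times the claimed bound; Theorem~\ref{splitting} produces an interval $k$-partite subgraph $H'$ on parts of size $m\le n$ with at least $\Omega(Cm^\alpha)$ edges (in the even case, the logarithmic factor on the input absorbs the logarithmic loss from the splitting theorem). Pigeonhole on the $\binom{r-1}{k-1}$ composition types of $H'$-edges restricts to a single type $(t_1,\dots,t_k)$ with $\sum t_i=r$, and a further pigeonhole on the constantly many ``sharing patterns'' --- distributions $(s_i)_{i=1}^k$ with $\sum s_i=\ell$ together with specified positions within $[t_i]$ for the shared vertices in each $J_i$ --- keeps $\Omega(Cm^\alpha)$ edges.

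For a fixed sharing pattern, project each edge onto its $\ell$ shared coordinates: there are at most $O(m^\ell)$ projections, so some projection receives $\Omega(Cm^{\alpha-\ell})\ge 2$ edges for $C$ large. A further application of Theorem~\ref{splittinglem} within each part $J_i$ (or an equivalent direct splitting of each $J_i$ into sub-intervals) forces the non-shared coordinates of the selected edges to interleave properly, so that the pair realizes an interval $r$-partite copy of $I^r(\ell)$, contradicting the $\ord(I^r(\ell))$-free assumption on $H$. The main obstacle is the lower-bound construction for the $r-\lceil(\ell+1)/2\rceil$-term: matching the combinatorial count $\lceil(\ell+1)/2\rceil$ (the minimum number of interval positions spanned by the shared $\ell$ vertices in an interval $r$-partite $I^r(\ell)$) with an explicit ordered $r$-graph avoiding every such embedding is the most delicate step.
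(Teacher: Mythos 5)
There are genuine gaps on both sides of this theorem.

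\textbf{Lower bound.} You explicitly defer the construction for the $r-\lceil(\ell+1)/2\rceil$ term, and that is precisely the nontrivial content of the lower bound. The paper's Construction~4 does it with a matching $M_2$ of \emph{consecutive} pairs $\{2i-1,2i\}$ (plus a matching $M_3$ of consecutive triples when $\ell$ is even): each edge is built from $\lceil(\ell+1)/2\rceil$ whole blocks of $M_2\cup M_3$ together with $r-\ell-1$ free vertices from a separate interval. Because the blocks are intervals of length $2$ or $3$, any two edges that form an \emph{interval} $r$-partite pair must use identical blocks, forcing $|e\cap f|\ge \ell+1$; so no member of $\ord(I^r(\ell))$ appears, and the edge count is $\Theta(n^{r-\lceil(\ell+1)/2\rceil})$. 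Without an explicit construction realizing the heuristic ``$\lceil(\ell+1)/2\rceil$ is the minimum number of intervals spanned by the shared vertices,'' the lower bound is unproven. For the $\ell$-term your fixed-set construction works only when $r>2\ell$; the ``symmetric variant with an alignment/parity condition'' for $r\le 2\ell$ is not specified, whereas a Steiner (or partial Steiner) $(n,r,\ell)$-system handles all cases at once.

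\textbf{Upper bound.} Your pigeonhole on projections does not deliver what is needed. Two edges with the same projection onto $\ell$ designated ``shared'' coordinates may intersect in \emph{more} than $\ell$ vertices, and to extract two whose remaining $r-\ell$ coordinates are disjoint by counting alone you would need on the order of $m^{r-\ell-1}$ edges per projection class, i.e.\ $\Omega(m^{r-1})$ edges in total --- far more than the $\Theta(m^{\alpha})$ available when $\alpha<r-1$. This ``exactly $\ell$'' difficulty is the whole point, and the paper resolves it by reduction to the \emph{unordered} Frankl--F\"uredi theorem $\ex(N,I^k(\ell'))=O(N^{\max\{\ell',k-\ell'-1\}})$: after splitting with $k=\alpha$ (odd $\ell$) or $k=\alpha+1$ (even $\ell$), one pigeonholes on the set $f(e)$ consisting of the first $|e\cap I_j|-1$ vertices of $e$ in each part to find a common $(r-k)$-set $S$ lying in many edges, passes to the $k$-uniform interval $k$-partite hypergraph $H''=\{e\setminus S\}$, and applies the unordered bound with $\ell'=\ell-r+k$; any $f,g\in H''$ with $|f\cap g|=\ell'$ automatically yield an interval $r$-partite copy after restoring $S$. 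The inequality $\max\{\ell',k-\ell'-1\}\le \alpha-r+k$ for these choices of $k$ is exactly where the exponent $r-\lceil(\ell+1)/2\rceil$ comes from; your outline never invokes the unordered result and gives no mechanism producing that exponent on the upper-bound side.
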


Note that the $\ell=0$ case is covered by Theorem~\ref{ekr} which gives
$\ex_{\to}(n, \ord(I^r(0))) =\Theta(n^{r-1})$. A construction of a dense $\mbox{ord}(I^r(\ell))$-free ordered $r$-graph is given in Construction 4. We believe the $\log n$ factor when $\ell$ is even
can be removed, so that $\ex_{\to}(n, \ord(I^r(\ell))) = \Theta(n^{\alpha})$.


\medskip

We present four constructions  in Section \ref{construction} and prove
Theorem \ref{splitting}  in Section~\ref{split1proof}.  Theorems~\ref{simplex}--\ref{exactell} are proved in
Section \ref{app-proof}.

\section{Constructions}\label{construction}


Our first construction requires the following lemma.

\begin{lem}\label{leM} Let $H_{n_1,n_2}$ be the ordered bipartite graph  with vertex set $[n_1+n_2]$ and
	parts $A=[n_1]$ and $B=\{n_1+1,\ldots,n_1+n_2\}$ such that for $i<j$, the pair $ij$ is an edge in $H(n_1,n_2)$ iff $1\leq i\leq n_1<j\leq n_1+n_2$ and
	$j-i$ is a power of $2$. Then for each $A'\subseteq A$ and $B'\subseteq B$, the number of edges in $H_{n_1,n_2}[A'\cup B']$
	is at most $2|A'\cup B'|$.
\end{lem}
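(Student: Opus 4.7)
The plan is to recoordinate so that the condition ``$j-i$ is a power of $2$'' becomes ``$u+v$ is a power of $2$'' for a transformed pair, after which a clean degeneracy induction on $|S|$ works. For $a\in A$ put $u(a)=n_1+1-a\in[1,n_1]$, and for $b\in B$ put $v(b)=b-n_1-1\in[0,n_2-1]$. A direct calculation gives
\[
b-a \;=\; (n_1+1+v(b)) - (n_1+1-u(a)) \;=\; u(a)+v(b),
\]
so the map $\{a,b\}\mapsto(u(a),v(b))$ is a bijection between the edges of $H_{n_1,n_2}[A'\cup B']$ and the pairs $(u,v)\in U\times V$ with $u+v\in\{2^k:k\ge 0\}$, where $U=u(A')$ and $V=v(B')$. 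Since $A'$ and $B'$ are disjoint, $|U|+|V|=|A'\cup B'|$, so the lemma reduces to: for any finite $U,V\subseteq\mathbb{Z}_{\ge 0}$, at most $2(|U|+|V|)$ pairs $(u,v)\in U\times V$ have $u+v$ a power of $2$.

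I would prove this by induction on $m=|U|+|V|$; the case $m\le 1$ is trivial. For the step let $w=\max(U\cup V)$. If $w=0$ there are no pairs (powers of $2$ are positive), so assume $w\ge 1$. Every pair involving $w$ has the form $w+x=2^k$ with $x$ on the opposite side and $0\le x\le w$ by maximality of $w$; thus $2^k\in[w,2w]$. The key elementary observation is that the dyadic window $[w,2w]$ contains at most two powers of $2$ --- always $2^{\lceil\log_2 w\rceil}$, and additionally $2w$ itself when $w$ is a power of $2$ --- since consecutive powers of $2$ differ by a factor of $2$. Hence at most $2$ pairs are incident to $w$. Removing $w$ from whichever of $U,V$ contains it and applying the inductive hypothesis to the resulting sets (of total size $m-1$) gives at most $2+2(m-1)=2m$ pairs, as required. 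Translating back via $u,v$ yields $e(H_{n_1,n_2}[A'\cup B'])\le 2|A'\cup B'|$.

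There is no real obstacle beyond spotting the change of variables that turns the difference condition into a sum condition: once the problem is in sum form, the maximum element of $U\cup V$ is the natural pivot, and the degree bound $2$ is just the observation that a dyadic window contains at most two powers of $2$.
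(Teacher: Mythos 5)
Your proof is correct, and it takes a genuinely different route from the paper's. The paper argues by contradiction: for each vertex it deletes the two edges going to its smallest and largest neighbours on the other side (at most $2(|A'|+|B'|)$ deletions), and then shows a surviving edge $\{a,b\}$ would force vertices $a<a'<b'<b$ with $b-a$, $b'-a$, $b-a'$ all powers of $2$, which is impossible since the latter two are each at most $(b-a)/2$ yet sum to $(b-a)+(b'-a')>b-a$. Your argument instead recoordinatizes so that the difference condition becomes a sum condition and then proves $2$-degeneracy directly: the maximal element $w$ of $U\cup V$ lies in at most $2$ good pairs because the window $[w,2w]$ contains at most two powers of $2$, and induction finishes. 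This is slightly stronger structural information (an explicit degeneracy order) and is arguably cleaner; the paper's version avoids the change of variables but needs the small three-powers-of-two contradiction. One point worth making explicit in your write-up: since $U$ and $V$ may intersect as subsets of $\mathbb{Z}_{\ge 0}$, ``pairs incident to $w$'' should be read as pairs whose coordinate in the set you delete $w$ from equals $w$; these are exactly the pairs lost in the inductive step, and there are at most two of them, so the induction closes correctly. Both arguments are elementary and of comparable length.
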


\begin{proof} Suppose for some $A'\subseteq A$ and $B'\subseteq B$, graph $H:=H_{n_1,n_2}[A'\cup B']$ has  more than $2(|A'| + |B'|)$ edges. Let us assume
	$A' = \{a_1,a_2,\dots,a_l\}$ and $B' = \{b_1,b_2,\dots,b_m\}$ where
	\[ a_1 < a_2 < \dots < a_l < b_1 < b_2 < \dots < b_m.\]
	For each vertex $a\in A'$, remove from $H$ the edges $\{a,b_i\}$ and $\{a,b_j\}$ where $i$ is minimum index for which such an edge exists, and $j$ is maximum index for which such an edge exists. Repeat this procedure for $b \in B'$ with respect to vertices in $A'$. Since $H$ has more than $2(|A'| + |B'|)$ edges, and we removed at most $2(|A'| + |B'|)$ edges, the remaining graph $H'$ has
	an edge $\{a,b\}$ with $a \in A', b \in B'$. Now there exist vertices $a'$ and $b'$ such that $\{a',b\}$ and $\{a,b'\}$ are edges
	and $a < a' < b' < b$. However, it is not possible for $b - a$, $b' - a$ and $b - a'$ all to be powers of $2$.
\end{proof}

\medskip

Our first construction shows the logarithmic factor in the first bound in Theorem \ref{splitting} is necessary.

\medskip

{\bf Construction 1:} {\em An $n$-vertex $r$-graph $H_r(n)$ with $dn^{k - 1}$ edges such that for every $m\leq n$
\begin{equation}
e(H') = O\Bigl(\frac{dm^{k - 1}}{\log n}\Bigr)
\end{equation}
for every interval $k$-partite $H' \subset H_r(n)$ with parts of size $m$.}

\smallskip

The vertex set of $H_r(n)$ is $[n]$  ordered as
$1 < 2 < \dots < n$. The edges of $H_r(n)$ are the sets $\{v_1,v_2,\dots,v_r\}$
with $v_1 < v_2 < \dots < v_r$ such that the difference $v_{i+1} - v_{ i}$ is a power of $2$ for $i=1, \ldots, r-k+1$.
Then $e(H_r(n)) = \Theta(n^{k-1}(\log n)^{r-k+1}) = \Theta(dn^{k - 1})$ where $d = (\log n)^{r - k + 1}$. Let $H'$ be any interval $k$-partite subgraph of $H_r(n)$, with ordered parts $I_1<I_2<\ldots <I_k$ each of size $m$,
  and $G'$ be the bipartite graph whose edges are pairs
$\{v,w\} \subset e = \{v_1,v_2,\dots,v_r\} \in H'$ where $v$ is the largest vertex in $I_1$ and $w$ is the smallest vertex in $I_2$. Note, crucially,  that $w-v$ must be a power of 2, since otherwise the $r-k+2$ smallest vertices of $e$ lie in $I_1$,  which means that some $I_j$ is empty.
Lemma~\ref{leM} now yields that $e(G')\leq 2v(G') \leq 4m$. But then
$e(H') = O(m^{k-1} (\log n)^{r-k})$, so $e(H') = O(dm^{k-1}/\log n)$.

\medskip

 Our next construction shows that the bound $\alpha \geq k - 1$ in Theorem~\ref{splitting} cannot be improved.

{\bf Construction 2:} {\em An $r n$-vertex $r$-graph  $H^r_n(k)$ with $dn^{\alpha}$ edges such that for $1 \leq \alpha < k - 1 \leq r - 1$ and $a = \min\{1,k-1-\alpha\} > 0$,
\begin{equation}\label{hrn}
e(H') = O(dm^{\alpha}/n^{a})
\end{equation}
for every interval $k$-partite $H'\subset H^r_n(k)$ with parts of size $m$.}

\smallskip

For $k \leq r$ and $1 \leq j \leq n$, let $I_j = \{(r-k+2)j-(r-k+1),(r-k+2)j-(r-k+1)+1,\dots,(r-k+2)j\}$, so that $|I_j| = r - k + 2$, and let
$H^r_n(k)$ be the ordered $r$-graph
with vertex set $[r n]$ and edge set
$$\{I_j \cup \{a_{r-k+3},a_{r-k+4},\ldots,a_r\}\,: 1 \leq j\leq n,\;\mbox{and}\; (\ell-1)n<a_{\ell}\leq \ell n\;\mbox{for} \; r-k+3\leq \ell\leq r\}.$$
By definition, $e(H^r_n(k)) = n^{k-1} = dn^{\alpha}$ where $d = n^{k-1-\alpha}$.
On the other hand, let $H'$ be  an interval $k$-partite subgraph $H'$ of $H^r_n(k)$ with parts of size $m$. Then
  there exists $j$ such that every edge of $H'$ contains $I_j$. In particular,
  \[ e(H') = O(m^{k-2}) = O\Bigl(dm^{\alpha} \cdot \frac{m^{k - 2 - \alpha}}{n^{k - 1 - \alpha}}\Bigr).\]
  If $\alpha \leq k - 2$, then $m^{k - 2 - \alpha}/n^{k - 1 - \alpha} = O(1/n)$. If $\alpha > k - 2$,
  then $m^{k - 2 - \alpha}/n^{k - 1 - \alpha} = O(n^{k - 1 - \alpha})$,  as claimed in~\eqref{hrn}.




\medskip

The next construction shows that the interval $k$-partite subgraph guaranteed by Theorem \ref{splitting} may have few vertices.

\medskip

{\bf Construction 3:} {\em Let $k - 1 < \alpha \leq r$. We give an ordered $n$-vertex $r$-graph $H(n,r)$ with $dn^{\alpha}$ edges
such that for every interval $k$-partite subgraph $H'$ of $H(n,r)$ with parts of size $m$ and
$e(H') = \Omega(dm^{\alpha})$,
\[ m = O(n^{1 - 1/\alpha}).\]}

Consider the ordered $r$-graph $H = H(n,r)$ with vertex set $[n]$ and edge set $\{[i,i+r-1] : 1 \leq i \leq n - r + 1\}$.
Then
\[ e(H)  = n - r + 1 = dn^{\alpha}\]
where $d = \Theta(n^{1 - \alpha})$.
On the other hand, if $H'$ is an interval $k$-partite subgraph with parts of size $m$, then $H'$ cannot contain two
disjoint edges, so $e(H') \leq r$. So if $e(H') = \Omega(dm^{\alpha})$, then
$dm^{\alpha} = O(r)$ so $m^{\alpha} = O(1/d) = O(n^{\alpha - 1})$.

\medskip

Our final construction provides a lower bound on $\ex_{\to}(n,\mbox{ord}(I^r(\ell))$ for Theorem \ref{exactell}:

\medskip

{\bf Construction 4:} {\em For $0 \leq \ell \leq r - 1$ and $\alpha = \max\{\ell,r - \lceil (\ell + 1)/2 \rceil\}$,
we give an ordered $6n$-vertex $r$-graph $H(n,r,\ell)$ with $\Omega(n^{\alpha})$ edges not containing
any member of $\mbox{ord}(I^r(\ell))$.}

For $\alpha = \ell$, let $H(n,r,\ell)$ be a Steiner $(n,r,\ell)$-system with any ordering of the vertices. Since 
$H(n,r,\ell)$ is $I^r(\ell)$-free, it is also $\mbox{ord}(I^r(\ell))$-free. If $\alpha = r - \lceil (\ell + 1)/2\rceil$, 
let $H(n,r,\ell)$ be defined as follows. The vertex set of $H(n,r,\ell)$ is $[6n]$. 
Let $M_2$ be the set of pairs $\{2i-1,2i\}$ for $1 \leq i \leq n$, and let $M_3$ be the set
of triples $\{2n + 3i-2,2n+3i-1,2n+3i\}$ for $1 \leq i \leq n$. If $\ell$ is odd, then the edges of $H(n,r,\ell)$ consist of $r - \ell - 1$ 
vertices from $[5n,6n]$, and $(\ell + 1)/2$ pairs from $M_2$. If $\ell$ is even, each edge of $H(n,r,\ell)$ 
consists of one triple from $M_3$, $\ell/2 - 1$ pairs from $M_2$, and $r - \ell - 1$ vertices from $[5n,6n]$. If 
$\ell$ is odd, then 
\[ e(H(n,r,\ell)) = {n \choose (\ell + 1)/2} \cdot {n \choose r - \ell - 1} = \Theta(n^{\alpha}).\]
If $\ell$ is even, then 
\[ e(H(n,r,\ell)) = n \cdot {n \choose \ell/2 - 1} \cdot {n \choose r - \ell - 1} = \Theta(n^{\alpha}).\]
Furthermore, $H(n,r,\ell)$ contains no member of $\mbox{ord}(I^r(\ell))$: if $\{e,f\} \in \mbox{ord}(I^r(\ell))$ and 
$e,f \in H(n,r,\ell)$, then $e \cap [5n] = f \cap [5n]$ and so $|e \cap f| = \ell + 1$, a contradiction.

\section{Proof of Theorems~\ref{splittinglem} and~\ref{splitting}}\label{split1proof}

{\bf Proof of Theorem \ref{splittinglem}.} Let $g = \lfloor\log_2 n\rfloor$, so that $2^{g} \leq  n < 2^{g+1}$.
	For $0 \leq i \leq g$, let $\mathcal{I}_i$ be the partition of $V(H)$ into intervals of length $2^{g-i}$ plus one interval of length at most $2^{g-i}$ containing the vertex $n$. Note $\mathcal{I}_g$ is the partition into singletons, so for each $e \in H$, there exists a minimum $i(e)$ such that $e$ intersects at least $k$ intervals in $\mathcal{I}_{i(e)}$. For $0 \leq i \leq g$, let
\[ H_i = \{e \in H : i(e) = i\}\]
 so that $H = \bigsqcup_{i=0}^g H_i$ -- the $H_i$ are edge-disjoint. Theorem \ref{splittinglem} follows from the following claim: for $0 \leq i \leq g$, $H_i$ is a union of $t_i \leq \sum_{j = k}^r {2k-2 \choose j} \cdot 2^{i(k - 1)}$ interval $k$-partite hypergraphs $H_{ij} : 1 \leq j \leq t_i$ with parts from $\mathcal{I}_i$ -- note that the parts in $\mathcal{I}_i$ each have size at most $2^{g - i} \leq  \lceil n/2^i\rceil$.
The claim is trivial for $i = 0$, since $H_0$ is empty unless $k = 2$ and $n > 2^g$, in which case $H_0$ is $k$-partite and $t_0 = 1$.
 To see the claim for $i \geq 1$, note that for $e \in H_i$, there are  $s\leq k - 1$ intervals $I_1,I_2,\dots,I_{s} \in \mathcal{I}_{i-1}$ such that
$e = \bigcup_{j = 1}^{s} (e \cap I_j) \subset I = \bigcup_{j = 1}^{s} I_j$,
  by the definition of $i(e) = i$. 
   If $|\mathcal{I}_{i-1}|\geq k-1$,
  then by adding intervals if needed we can assume $s=k-1$. In this case, there are at most ${|\mathcal{I}_{i-1}|\choose k-1}\leq \frac{|\mathcal{I}_{i-1}|^{k - 1}}{(k-1)!} \leq 
  \frac{2^{i(k - 1)}}{(k-1)!}$ choices for these $k-1$ intervals, and then at most $\sum_{j = k}^r {2k-2 \choose j}$ choices for the intervals from $\mathcal{I}_i$ contained in $I$ and intersecting $e$. 
  
  If $|\mathcal{I}_{i-1}|\leq k-2$,
  then again by adding intervals if needed we can assume $s=|\mathcal{I}_{i-1}|$, $I=[n]$, and have at most $\sum_{j = k}^r {2s \choose j}\leq \sum_{j = k}^r {2k-4 \choose j}$ choices for the intervals from $\mathcal{I}_i$ contained in $I$ and intersecting $e$. Since  $e$ intersects at least $k$ intervals in $\mathcal{I}_{i(e)}$,
   $k\leq 2s\leq 2^{i+1}$, and $2^{i(k - 1)}\geq \left(\frac{k}{2}\right)^{k-1}\geq 
  (k-1)!$; so the bound of the theorem holds again.
  \qed
	
	\medskip
	
{\bf Proof of Theorem \ref{splitting}.} We derive Theorem \ref{splitting} from Theorem \ref{splittinglem}, using the notation of its proof. Let $H$
be an $n$-vertex ordered $r$-graph with $dn^{\alpha}$ edges. Let $C = \sum_{j = k}^r {2k - 2 \choose j}$ and $m_i = 2^{g - i}$.
We prove that some $H_{ij}$ has
\[ e(H_{ij}) \geq \left\{\begin{array}{ll}
\displaystyle{\frac{1}{C}} \cdot  \frac{dm_i^{\alpha}}{1+\log_2 n} & \mbox{ if } \alpha = k - 1 \\ \\
\displaystyle{\frac{1 - 2^{k-1-\alpha}}{C}} \cdot d m_i^{\alpha} & \mbox{ if } \alpha > k - 1
\end{array}\right.
\]
Note that $H_{ij}$ is $k$-partite and the parts of $H_{ij}$ have size at most $m_i$, so the above statements imply Theorem \ref{splitting}.
Suppose, for a contradiction, that no $H_{ij}$ satisfies the above bounds.

{\bf Case 1.} $\alpha = k - 1$.  Then recalling $2^{g} \leq n$ and $t_i \leq C \cdot 2^{i(k - 1)}$,
\begin{eqnarray*}
e(H) \; \; = \; \; \sum_{i = 0}^g e(H_i) &\leq& \sum_{i = 0}^g \sum_{j = 1}^{t_i} e(H_{ij}) \\
&<& \sum_{i = 0}^g \sum_{j = 1}^{t_i} \frac{1}{C} \cdot  \frac{dm_i^{\alpha}}{1 + \log_2 n} \\
&=& \sum_{i = 0}^g \sum_{j = 1}^{t_i} \frac{d \cdot 2^{(k - 1)(g - i)}}{C(1 + \log_2 n)} \\
&\leq& dn^{k - 1} \sum_{i = 0}^g \frac{1}{1 + \log_2 n}   \; \; = \; \; (g + 1) \cdot  \frac{d \cdot n^{k - 1}}{1 + \log_2 n}.
\end{eqnarray*}
Since $g \leq \log_2 n$,  $e(H) <  d \cdot n^{k-1}$, a contradiction.

{\bf Case 2.} $\alpha > k - 1$. Let $c = (1 - 2^{k-1-\alpha})/C$. Then using $t_i \leq C \cdot 2^{i(k - 1)}$,
\begin{eqnarray*}
e(H) &\leq& \sum_{i = 0}^g \sum_{j = 1}^{t_i} e(H_{ij}) \\
&<& \sum_{i = 0}^g \sum_{j = 1}^{t_i} c \cdot dm_i^{\alpha}  \\
&\leq& \sum_{i = 0}^g C \cdot 2^{i(k - 1)} \cdot c \cdot d2^{\alpha(g - i)} \\
&\leq& dn^{\alpha} \cdot (1 - 2^{k-1-\alpha}) \cdot \sum_{i = 0}^g 2^{i(k-1-\alpha)}.
\end{eqnarray*}
Since $\alpha > k - 1$, the geometric series sum is less than $1/(1 - 2^{k-1-\alpha})$, and $e(H) < dn^{\alpha}$.
This contradiction completes the proof. \qed

\section{Proofs of Theorems \ref{simplex} -- \ref{exactell}}\label{app-proof}

Let $P_k^r$ denote the $r$-uniform tight path, which has vertex set $V=\{v_0,\ldots,v_{k+r-2}\}$ and edge set $\{\{v_i, v_{i + 1},\dots,v_{i + r - 1}\}:\, 0 \leq i \leq k-1\}$. Then $\ord(P_k^r)$ contains the ordered $r$-graph $ZP_k^r$ with edges $\{v_i, v_{i + 1},\dots,v_{i + r - 1}\}$ for $0 \leq i < k$
with a partition of $V$ into $r$ intervals $X_0 < X_1 < \dots < X_{r-1}$ such that vertices $v_i < v_{i + r} < v_{i + 2r} < \dots$ are in $X_i$ if $i$ is even and
$v_i > v_{i + r} > v_{i + 2r} > \dots$ in $X_i$ if $i$ is odd. Extremal problems for $ZP_k^r$ are studied in~\cite{FJKMV}, where the following theorem is (implicitly) proved:

\begin{thm}\label{zigzag}
For $k,r \geq 2$,
\[ \ex_{\to}(n,ZP_k^r) \leq (k - 1){n \choose r - 1}.\]
\end{thm}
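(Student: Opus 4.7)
My plan is to define a shadow labeling $\psi : E(H) \to \binom{[n]}{r-1}$ such that any collision of $k$ edges under $\psi$ already exhibits a copy of $ZP_k^r$; by pigeonhole this will force $e(H) \le (k-1)\binom{n}{r-1}$ whenever $H$ is $ZP_k^r$-free. The base cases are direct: for $k = 1$ any single edge works, and for $k = 2$ the natural labeling is $\phi(e) = e \setminus \{\min e\}$ (the top $(r-1)$-shadow), since any two edges $e \neq e'$ with $\phi(e) = \phi(e') = S$ yield the partition $X_0 = \{\min e, \min e'\}$ (with $\min e < \min e'$) and $X_j = \{s_j\}$ for $1 \le j \le r-1$, where $S = \{s_1 < \cdots < s_{r-1}\}$, and this partition realises $\{e, e'\}$ as a copy of $ZP_2^r$.

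For the inductive step $k \ge 3$, the shadow $\psi(f)$ should be refined: one naturally takes $\psi(f) = f \setminus \{u_{j(f)}\}$, where $j(f)$ is the position of $f$ that would be replaced when $f$ is spliced into an extending zigzag path at the cyclic slot $((i-1) \bmod r) + 1$. A collision $\psi(f_1) = \cdots = \psi(f_k) = S$ should then yield $k$ edges which, when ordered correctly, are consecutive edges of a $ZP_k^r$, since consecutive edges of any zigzag path share exactly $r - 1$ vertices and differ in a cyclically varying position. The pigeonhole applied to $\psi$ then caps the number of preimages at $k-1$, giving $e(H) \le (k-1)\binom{n}{r-1}$ whenever no $ZP_k^r$ appears.

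The main obstacle will be the directional constraint of the zigzag: at each successive step the ``new'' vertex must alternate between being larger and smaller than the vertex it replaces in its interval $X_j$, according to the parity of the step. A uniform shadow choice (e.g., always the top $(r-1)$-shadow) produces collisions that form only a star, i.e., $k$ edges with a common $(r-1)$-set, and for $k \ge 3$ such a star has the wrong degree sequence to contain $ZP_k^r$. I expect to overcome this by augmenting $\psi(f)$ with a direction bit recording the zigzag orientation of the would-be replacement step, and arguing that the resulting doubled label space can still be managed with the sharp constant $(k-1)$---most likely by splitting $H$ into two sub-hypergraphs according to the bit and treating each symmetrically, or by a more careful inductive argument on the length of a longest zigzag subpath of $H$ that recovers the sharp constant.
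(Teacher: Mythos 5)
There is a genuine gap here. First, a point of reference: the paper does not actually prove Theorem~\ref{zigzag}; it quotes it as implicitly established in \cite{FJKMV}, so you are on your own for the argument, and what you have written is a plan rather than a proof. Your base case $k=2$ is correct: two edges with the same top $(r-1)$-shadow do form a copy of $ZP_2^r$. But the engine you propose for general $k$ --- a single labeling $\psi:E(H)\to\binom{[n]}{r-1}$ such that any $k$-fold collision yields a $ZP_k^r$ --- cannot work in the form stated. Consecutive edges of $ZP_k^r$ share $r-1$ vertices, but \emph{different} consecutive pairs share \emph{different} $(r-1)$-sets (the dropped vertex cycles through the $r$ intervals), so $k$ edges with a common label under any fixed $(r-1)$-set-valued map form a tight star, not a tight path; you notice this yourself. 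Your refined labeling $\psi(f)=f\setminus\{u_{j(f)}\}$ does not repair this, because $j(f)$ is defined via ``the position of $f$ when spliced into an extending zigzag path,'' i.e., via an embedding of $f$ into a copy of $ZP_k^r$ that the argument is supposed to produce --- the definition is circular. Finally, the two escape routes you offer are not carried out: the ``direction bit'' at best doubles the label space (losing the constant $(k-1)$, and with no argument that collisions within one bit-class chain together into a path rather than a star), and ``a more careful inductive argument on the length of a longest zigzag subpath'' is named but not executed.

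That last clause is in fact where the correct proof lives, and it inverts your pigeonhole. The standard argument (as in \cite{FJKMV}) assigns to each edge $e$ the length $L(e)\in\{1,\dots,k-1\}$ of a longest zigzag path in $H$ whose final edge is $e$, together with the phase (which interval holds the vertex to be replaced next); one then shows that on each level set $\{e: L(e)=j\}$ an appropriate shadow map $e\mapsto e\setminus\{w(e)\}$ is \emph{injective} --- if two edges of the same level shared that shadow, one would extend the other's longest path, contradicting equality of levels. Summing $\binom{n}{r-1}$ over the $k-1$ levels gives the bound. Note the structure: it is not ``$k$ collisions force a path'' but ``$2$ collisions within a level force a longer path,'' which is exactly what sidesteps the star obstruction you ran into. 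Even this requires care with the alternating direction and with verifying that the extended vertex set still admits the required partition into $r$ consecutive intervals, none of which appears in your proposal. As written, the proposal establishes only the case $k=2$.
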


In particular, this theorem gives the same upper bounds for the extremal function for $\ord(P_k^r)$, as $ZP_k^r \in \ord(P_k^r) $.   In~\cite{FJKMV, FJKMV2} we also obtain  ordered versions of the Erd\H{o}s-Ko-Rado Theorem  by taking every $r$th edge of $P_k^r$.

An ordered $r$-graph $H$ with vertex set $V$ is a {\em $(1,r-1)$-graph} if there exists an interval $X\subset V$ such that every edge of $H$ has exactly one vertex in $X$. Finally, an interval $(r-1)$-partite $r$-graph is a $(1, r-1)$-graph simply by combining parts.

\medskip

{\bf Proof of Theorem \ref{simplex}.} A {\em
strong $d$-dimensional $r$-simplex} $\hat{\mathcal{S}^r_d}$ is an $r$-graph consisting of $d + 2$ edges such that we may order the edges so that the first $d + 1$ edges form a $d$-simplex (see the definition in Section 1.3),
and the last edge contains at least one vertex from the intersection of every $d$-tuple of the edges of the $d$-simplex. For example, a strong 1-simplex comprises three edges $e,f,g$ such that  $e \cap f =\emptyset$ (so $e$ and $f$ form a 1-simplex), and both $e \cap g$ and $f \cap g$ are nonempty.
It is convenient to assume such an ordering of the edges of a strong simplex is given. We introduce strong simplices for the purpose of doing a
simple induction on $d$: we show that
\[ \ex_{\to}(n,\ord(\hat{\mathcal{S}^r_d})) \leq r^{10dr} n^{r - 1}.\]

The base case $d = 1$ follows easily from Theorem \ref{zigzag}: if $H$ is an ordered $r$-graph with more than
$r^{10r}{n \choose r - 1}$ edges, then $ZP_{r+1}^r \subset H$, and any three edges of $ZP_{r+1}^r$ that include the first and last edge form a strong 1-simplex.  Now suppose we have proved the
theorem for strong $d - 1$ simplices for some $d \geq 2$, and let $H$ be an $n$-vertex ordered $r$-graph with more than $ r^{10dr} n^{r - 1}$ edges. Applying Theorem~\ref{splitting} with $k = \alpha = r-1$
we find an interval $(r - 1)$-partite subgraph $G$ of $H$ with parts of size at most $m$ and
$$e(G) \geq c(r-1,r-1,r) \cdot r^{10dr} m^{r - 1}$$
for some $m > 0$ with intervals $X$ and $Y = V(G) - X$ as the parts of $G$. By (\ref{cvalue}), it is straightforward to check $c(r-1,r-1,r) > r^{-4r}$, and therefore
\begin{equation}\label{ebound}
 e(G) > r^{-4r}r^{10dr} m^{r - 1} > 2 r^{10(d-1)r} m^{r-1}.
 \end{equation}
We remove from $G$ each edge containing an $(r - 1)$-set in $Y$ which is contained in at most two edges of $G$. This way, we delete at most $2{m\choose r-1}$ edges. Since $r\geq 3$, this is less than $m^{r-1}$, and hence the remaining
  $(1,r-1)$-subgraph $G'$ of $G$ has at least $r^{10(d-1)r} m^{r - 1}$ edges. By averaging, some vertex $x \in X$ is contained in at least
  $$r^{1+10(d-1)r} m^{r - 2}\geq r^{10(d-1)(r-1)} m^{r - 2}$$ edges of $G'$. By induction, the link  of $x$ in $G'$ contains a strong $(d - 1)$-dimensional simplex $F$, say with edges $e_1,e_2,\dots,e_d,f$, with $e_1,e_2,\dots,e_d$ forming a
$(d - 1)$-dimensional simplex. Since $f$ is contained in at least $3$ edges of $G$, there exists $y \neq x$ such that $f \cup \{y\} \in G$. Then $e_1 \cup \{x\},e_2 \cup \{x\},\dots,e_d \cup \{x\},f \cup \{y\}$
is a $d$-dimensional simplex in $H$, and together with $f \cup \{x\}$, we have a strong simplex in $H$. This proves the theorem. \qed

\medskip

{\bf Proof of Theorem \ref{expansion}.}
Let $M$ denote the largest number of edges in an $r$-graph in $\mathcal{F}^+$ and suppose $\ex_{\to}(n,\ord(\mathcal{F})) \leq cn^{r - 2}$ for all $n>1$. We will prove that $\ex_{\to}(n,\ord(\mathcal{F}^+)) \le c' n^{r-1}$ where $c' = \left(\frac{M+c}{2}\right) r^{10r}$. Suppose that $H$ is an $n$-vertex $r$-graph
with more than $c'n^{r-1}$ edges.
Applying Theorem~\ref{splitting} with $k=\alpha = r-1$,
we find an $m$-vertex  $(1,r-1)$-subgraph $G$ of $H$ with at least
$r^{-4r}c' m^{r - 1} > 2c'r^{-10r}m^{r-1}$ edges as in (\ref{ebound}),
with parts $X$ and $Y$, such that every edge has one vertex in $X$. For each $(r - 1)$-set in $Y$ contained in at most $M - 1$ edges of $G$, remove
all edges of $G$ containing that $(r - 1)$-set. The number of edges that we removed is at most $Mm^{r-1}$, so the remaining  $r$-graph $G' \subset G$ has more than
$$(2c' r^{-10r}-M) m^{r - 1} = c \, m^{r-1}$$
edges. By averaging, there exists
a vertex $x \in X$ whose link $(r - 1)$-graph $G''$ has more than $cm^{r - 2}$ edges. Then $G''$ contains a member $F$ of $\ord(\mathcal{F})$.
Since every edge of $F$ is contained in at least $M$ edges of $G$, we can expand the edges of $F$ to distinct vertices of $X$ to obtain a copy of
$F^+$ in $H$.  \qed

\bigskip

{\bf Proof of Theorem \ref{r-forest}.} We first present an easy proof for $r=2$, and then a significantly more involved general proof.

{\bf Case 1:} $r=2$.
Suppose that $F$ is a forest with $k$ edges. By adding edges, we may assume that $F$ is a tree. We  prove by induction on $k$ that $\ex_{\to}(n, \ord(F)) \le 2k^2n$. Let $H$ be an ordered $n$-vertex graph with more than $2k^2n$ edges and  let $F'$ be
a tree obtained from $F$ by deleting a leaf $y$. Let $x \in V(F')$ be the neighbor of $y$. For each vertex $v$ of $H$, mark the $k$ smallest neighbors of $v$ and the $k$ largest neighbors of $v$. Note that if $v$ has fewer than $k$ smaller neighbors then we mark them all, and similarly for larger neighbors. We marked at most $2kn$ edges so the resulting unmarked graph $H'\subset H$ has more than $2k^2 n - 2kn \ge 2(k-1)^2 n$ edges. By induction, $H'$ contains an interval  $2$-partite subgraph $K'$ isomorphic to $F'$, with parts $A <B$. Suppose that $v$ is the vertex of $K'$ that plays the role $x$ in $F'$, and assume first that $v \in A$. Then there is a vertex $w \in B$ with $\{v,w\} \in K'$, so by construction of $H'$, there is another vertex $w'>w$ such that the marked edge $\{v,w'\} \in H$ and $w' \not\in V(K')$. Adding edge $\{v, w'\}$ to $K'$ gives a copy $K$ of the $2$-interval-partite graph $F$ ($w'$ plays the role of $y$).  The same argument applies if $v \in B$. \qed

\medskip

{\bf Case 2:} $r\geq 3$. By Theorem \ref{splitting} with $\alpha = r - 1 = k$, it is enough to prove Theorem \ref{r-forest} for interval $(r-1)$-partite $r$-graphs.
Let $H$ be an interval $(r-1)$-partite $r$-graph with $n$ vertices and a partition of $V(H)$ into intervals $X_1 < X_2 < \dots < X_{r-1}$ where for some $i$,
and every $e \in H$, $|e \cap X_i| = 2$ and $|e \cap X_j| = 1$ for $j \neq i$. It is easy to check that every  forest $F$ is contained in a tight tree $T$  with the same set of vertices.
 We show by induction on $t = v(T) \geq r$ that if $e(H) > 2t^2 {n \choose r - 1}$, then $H$ contains a member of $\mbox{ord}(T)$. If $t = r$, then $T$ has one edge and clearly $e(H) = 1$ if $H$ is an $\mbox{ord}(T)$-free. Suppose the statement is true for all tight trees with fewer than $t$ vertices,
and let $T$ be a tight tree with $t$ vertices. Let $H$ be an $n$-vertex interval $(r-1)$-partite $r$-graph  with  more than $2t^2 {n \choose r - 1}$ edges. For each $f \in \partial H$, let $S(f)$ and $L(f)$ denote the set of the $t$ smallest and $t$ largest vertices $x \in V(H)$ such that $f \cup \{x\} \in H$. Then we remove all edges $f \cup \{x\}$
from $H$ such that $x \in S(f) \cup L(f)$. We obtain a new  ordered interval $(r-1)$-partite $r$-graph $H'$ with parts $X_1 < X_2 < \dots < X_{r - 1}$. Let $T' = T - \{y\}$ where
$y$ is a leaf of $T$, and $f \cup \{y\} \in T$. By induction,
$H'$ contains a member of $\mbox{ord}(T')$, since
\[ e(H') >2 t^2 {n \choose r - 1} - 2t {n \choose r - 1} > 2(t - 1)^2 {n \choose r - 1}.\]
Let this member of $\mbox{ord}(T')$ be denoted by $S$, and have parts $A_0 < A_1 < \dots < A_{r-1}$, where $A_{i-1},A_i \subseteq X_i$ and $A_j \subseteq X_j$ for $j \neq i$. Since $f \in \partial S \cap \partial H'$, $f \cap A_j = \emptyset$ for some $j \leq r$. If $j \not \in \{i,i-1\}$, then $S(f) \cup L(f) \subset X_j$, and since $|S(f) \cup L(f)| > t$ and $|V(T) \cap X_j| < t$, there exists $x \in X_j \backslash V(T)$ such that
$f \cup \{x\} \in H$ together with $S$ forms a copy of $T$ in $H$, with interval coloring $A_0' < A_1' < \dots, < A_{r-1}'$ where $A_h' = A_h$ for $h \neq j$ and $A_j' = A_j \cup \{x\}$. If $j = i$, then $f \cup \{z\} \in S$ for some $z \in A_i$. For every $x \in L(f)$, we have $x > z$ and $x \in X_i$. Since $|L(f)| = t$,
there exists $x \in L(f)$ such that $x > z$ and $x \not \in V(S)$. Now $f \cup \{x\} \in H$ together with $S$ is a copy of an element of $\mbox{ord}(T)$ in $H$,
with interval $r$-coloring $A_0' < A_1' < \dots, < A_{r-1}'$ where $A_h' = A_h$ for $h \neq i$ and $A_i' = A_i \cup \{x\}$. Finally,
if $j = i-1$, then $f \cup \{z\} \in S$ for some $z \in A_0$. For every $x \in S(f)$, we have $ x< z$ and $x \in X_{i-1}$. Since $|S(f)| = t$,
there exists $x \in S(f)$ such that $x < z$ and $x \not \in V(S)$. Now $f \cup \{x\} \in H$ together with $S$ is a copy of an element of $\mbox{ord}(T)$ in $H$,
with interval $r$-coloring $A_0' < A_1' < \dots, < A_{r-1}'$ where $A_h' = A_h$ for $h \neq i - 1$ and $A_{i-1}' = A_{i-1} \cup \{x\}$. This completes the proof. \qed

\medskip

{\bf Proof of Theorem~\ref{thmrsz}.} By Theorem~\ref{splitting} with $k = \alpha=2$, it is enough to prove Theorem~\ref{thmrsz} for interval $2$-partite $3$-graphs. Suppose that $\epsilon>0$ and $n_0$ is sufficiently large. Let $H$ be an $n$-vertex ordered interval $2$-partite 3-graph with at least $\epsilon n^2$ edges ($n>n_0$) containing no member of $\ord(I_2)$ and $A<B$ be intervals where every edge of $H$ has exactly one vertex in $A$. Let $G$ be the graph with vertex set $V(H) \cap B$ and edge set $\{yz: \exists x \in A, xyz \in H\}$. Since $H$ contains no member of $\ord(I_2)$, $e(G) =e(H)\ge \epsilon n^2$.   By Theorem B, there is an interval 2-partite  subgraph $G' \subset G$ with at least $\delta n^2$ edges, for some $\delta$ depending only on $\epsilon$.  Consequently,
there is an interval 3-partite subgraph $H' \subset H$  with $\delta n^2$ edges and we apply the Ruzsa-Szemer\'edi Theorem to $H'$ to obtain a copy of some member of $\ord(T_3)$. \qed

\medskip

{\bf Proof of Theorem~\ref{exactell}.}
We use the result of Frankl and F\"{u}redi~\cite{FF} stating that for $0 \leq \ell \leq r - 1$ and some constant $C(r,\ell) > 0$,
\begin{equation}\label{franklfuredi}
\ex(n,I^r(\ell)) < C(r,\ell) \cdot n^{\max\{\ell,r-\ell-1\}}.
\end{equation}
Construction 4 gives a lower bound of order $n^{\alpha}$ for $\ex_{\to}(n,\mbox{ord}(I^r(\ell)))$, 
so it remains to prove the upper bound in Theorem \ref{exactell}. We first prove the upper bound when $\ell$ is odd. 

\medskip

 Recall $\alpha = \max\{\ell, r - (\ell + 1)/2\}$, and let $k = \alpha$, $\ell' = \ell - r + k\ge 0$. Let $H$ be an ordered $n$-vertex $r$-graph with $C(k,\ell')(kn)^{\alpha}/c$ edges, where $c$ is the implicit constant in the second inequality of Theorem \ref{splitting}, namely (\ref{cvalue}). We aim to show $H$ contains a member of $\mbox{ord}(I^r(\ell))$.
By Theorem \ref{splitting} with $k = \alpha$, there is for some $m \in [n]$ an interval $k$-partite subgraph $H'$ of $H$ with $e(H') \geq C(k,\ell')(km)^{\alpha}$ and parts of size at most $m$. For each edge $e \in H'$, 
\[\sum_{j = 1}^k (|e \cap I_j| - 1) = r - k.\]
Let $f(e)$ be the set of the first $|e \cap I_j| - 1$ elements of $e \cap I_j$ for $1 \leq j \leq k$, so that $|f(e)| = r - k$. By the pigeonhole principle, there exists a set $S$ of size $r - k$ such that $f(e) = S$ for at least $|H'|/m^{r-k} \geq C(k,\ell') (km)^{\alpha - r + k}$ edges $e \in H'$.
Let $H'' = \{e \backslash S : S \subset e \in H'\}$, so $H''$ is an ordered $k$-uniform $k$-partite hypergraph with $N = v(H'') \leq km$ and $e(H'') \geq C(k,\ell')N^{\alpha - r + k}$. Since  $2\alpha= \max\{2\ell, 2r - \ell - 1\}\ge 2r-\ell-1$,
\begin{eqnarray*}
	\max\{k-\ell'-1,\ell'\} &=& \max\{r - \ell - 1,\ell - r + k\} \\
	&=& \max\{2r - \ell - 1 - k,\ell\} - r + k \\
	&\le & \max\{2\alpha - k,\ell\} - r + k \; \; = \; \; \max\{\alpha,\ell\}  - r + k \; \; = \; \; \alpha - r + k.
\end{eqnarray*}
It follows from (\ref{franklfuredi}) that $e(H'') \geq C(k,\ell')N^{\alpha - r + k} > \ex(N,I^k(\ell'))$. Therefore there exist $f,g \in H''$ with $|f \cap g| = \ell'$. Since $H''$ is $k$-partite, $\{f,g\} \in \mbox{ord}(I^k(\ell'))$ and now $\{f \cup S,g \cup S\} \in \mbox{ord}(I^r(\ell))$. 
We conclude
\[ \ex_{\to}(n,\mbox{ord}(I^r(\ell))) <  \frac{C(k,\ell')}{c(\alpha,k,r)} (kn)^{\alpha}.\]
This completes the proof of Theorem \ref{exactell} when $\ell$ is odd. 

\medskip

When $\ell\ge 2$ is even, $\alpha = \max\{\ell, r - (\ell + 2)/2\}$. Let $k = \alpha +1$, $\ell' = \ell - r + k \ge 0$, and let $H$ be an ordered $n$-vertex $r$-graph with $C(k,\ell')(kn)^{\alpha}(1+\log_2n)/c$  edges
where $c$ is the implicit constant in the first inequality of Theorem \ref{splitting}. Then for some $m \in [n]$ there is an interval $k$-partite subgraph $H'$ of $H$ with $e(H') \geq C(k,\ell')(km)^{\alpha}$ and parts of size at most $m$. Define the interval $k$-partite $k$-graph $H'' \subseteq H'$ as above. Since $\ell$ is even, $2\alpha= \max\{2\ell, 2r - \ell - 2\}\ge 2r-\ell-2$, and therefore
\begin{eqnarray*}
	\max\{k-\ell'-1,\ell'\} &=& \max\{r - \ell - 1,\ell - r + k\} \\
	&=& \max\{2r - \ell - 1 - k,\ell\} - r + k \\
	&\le& \max\{2\alpha - k + 1,\ell\} - r + k \; \; = \; \; \max\{\alpha,\ell\}  - r + k \; \; = \; \; \alpha - r + k.
\end{eqnarray*}
In the last line we used $k = \alpha + 1$. It follows from (\ref{franklfuredi}) that $H''$ contains a member of $I^k(\ell')$ and then 
$H$ contains a member of $I^r(\ell)$. This completes the proof of Theorem \ref{exactell} when $\ell$ is even. \qed

\paragraph{Acknowledgment.}

This research was partly conducted  during AIM SQuaRes (Structured Quartet Research Ensembles) workshops, and we gratefully acknowledge the support of AIM. We are very grateful to the referees for their careful reading of the paper that helped tremendously in improving the paper. We are especially thankful for spotting an important error in  our main theorem in the first version of this paper. In correcting the error we managed to  generalize   our main result and shorten its proof.

\end{document}